\DeclareMathOperator{\esssup}{ess\,sup}
\newtheorem{thm}{Theorem}[section]
\newtheorem{lemma}{Lemma}[section]
\newtheorem{prob}{Problem}[section]
\newtheorem{subprob}{Subroblem}[section]
\newtheorem{assumption}{Assumption}[section]
\newtheorem{remark}{Remark}[section]
\newtheorem{defn}{Definition}[section]
\newtheorem{example}{Example}[section]
\newcounter{nextauthor}
\def\mathrm{\mbox}
\numberwithin{remark}{section}
\begin{document}
\title{{\Large \bf A Linear-quadratic Mean-Field Stochastic Stackelberg Differential Game with Random Exit Time}\thanks{This work was supported by the National Natural Science Foundation of China (11471230, 11671282).}}
\author{Zhun Gou$^a$, Nan-jing Huang$^a$\footnote{Corresponding author.  E-mail addresses: nanjinghuang@hotmail.com; njhuang@scu.edu.cn} and Ming-hui Wang$^b$\\
{\small\it a. Department of Mathematics, Sichuan University, Chengdu, Sichuan 610064, P.R. China}\\
{\small\it b. Department of Economic Mathematics, Southwestern University of Finance and Economics,}\\
{\small\it Chengdu, Sichuan 610074, P.R. China}}
\date{}
\maketitle
\begin{center}
\begin{minipage}{5.5in}
\noindent{\bf Abstract.} In this paper, we investigate a new model of a linear-quadratic mean-field stochastic Stackelberg differential game with one leader and two followers, in which the leader is allowed to stop her strategy at a random time. Our overarching goal is to find the Stackelberg solution of the leader and followers for such a model. By employing the backward induction method, the state equation is divided into two-stage equations. Moreover, by using the maximum principle and the verification theorem, the Stackelberg solution is obtained for such a model.
\\ \ \\
{\bf Keywords:} Stochastic Stackelberg differential game; Mean-field stochastic differential equation; Random exit time;  Stackelberg solution.
\\ \ \\
{\bf 2020 Mathematics Subject Classification}: 49N80, 60H20, 60J76, 93E20.
\end{minipage}
\end{center}

\section{Introduction}
\paragraph{}
The study of Stackelberg game was pioneered by Stackelberg \cite{Stackelberg1952The}, where there are two players in the game. One player acts as the leader (she) while the other behaves as the follower (he). First, the leader announces her strategy and the follower reacts to it by optimizing his objective function in accordance with the leader's announced strategy. Then, the leader would like to seek a strategy to optimize her cost function based on the follower's best response. The best strategy of the leader together with the best response of the follower is known as a Stackelberg solution. Since then, Stackelberg game has been studied extensively by many authors because of its wide application in various fields, including economics and finance, management and decision, transportation and evolutionary biology (see, for example,  \cite{Carreno2019Stackelberg, Megahed2019the, Moon2021linear, Mu2018Stackelberg, zou2020A}). We would like to point out that many existed literatures only investigated the Stackelberg game with one leader and one follower. However, it is important to consider the Stackelberg game with multiple followers, which is more complicate but obviously more suitable for describing some practical problems. Different from the Stackelberg game with one leader and one follower, the strategies of all the players in the Stackelberg game with multiple followers should be impacted by all the other players' strategies (see, for instance, \cite{Askar2018Tripoly, Fang2017Coordinated, Jiang2018Multi}).

As an extension of classical Stackelberg game, the mean-field stochastic Stackelberg differential game (MF-SSDG), which is described by mean-field stochastic differential equations (MF-SDEs), has attracted much attention recently \cite{Bensoussan2017Linear, Bensoussan2020Mean, huang2020Linear, Lin2019feedback, Lin2018An, Moon2018linear, Si2021Backward, Wang2020An}. A significant feature of the game is that  not only the state variable and the controls but also their expectations are involved in the state equation and objective functions. Such a feature originates from the mean-field theory, which was developed to study the collective behaviors resulting from individuals mutual interactions in various physical and sociological dynamical systems. The current paper focuses on the study of MF-SSDG with multi-follower.

On the other hand, it is well known that default risk, which is closely related to default events, naturally appears in financial markets \cite{Aksamit2017Enlargement, Calvia2020Risk, Jeanblanc2020Characteristics}. The standard approach to model default risk is to use the theory of enlargement of filtration, which has been extensively studied by many authors (see, for example, \cite{Aksamit2017Enlargement, Jeanblanc2009CMathematical, Peng2009BSDE, Pham2010Stochastic}). Besides, for practical reasons, one may require considering default risk in economic models, such as the retailer-supplier uncooperative replenishment model \cite{Wu2017A} and supply chain with one retailer and several suppliers \cite{Babich2007SCompetition}. In \cite{Babich2007SCompetition, Wu2017A}, both of models were described by employing the stochastic Stackelberg differential game (SSDG). Moreover, in \cite{Peng2009BSDE},  the problem of zero-sum stochastic differential game with default risk was investigated, which can be seen as a utility maximization problem under the model uncertainty. Thus, it is necessary to consider the default risk in stochastic differential games. Especially in some real situations, the leader may stop her action at a "surprising" time (usually a random time), which cannot be read or predicted from the reference observation. For such a "surprising" time, we call it a random exit time which can be regarded as a default time. Let us illustrate this case with the following example.
\begin{example}\label{87}
Following \cite{Novak2021a}, we assume that the  dynamics of the resource stock $x(t)$ can be written as
\begin{align*}
dx(t)=[rx(t)-h(u(t),v_1(t),v_2(t))]dt,\quad x(0)=x_0>0,
\end{align*}
which does not only depend on the intensity of attacks from two terror organizations (the followers), but it is also influenced
by the counterterror measures from the government (the leader). Here $x_0$ denotes the initial stock of resources for terrorists; $u(t)$ is the counterterror measures of the government; $v_1(t)$ and $v_2(t)$ are  the intensity of attacks from terrorists. Moreover, we assume that along a trajectory the following non-negativity constraint applies:
$$
x(t)\geq0,\quad \mbox{for all $t\geq0$}.
$$
Some suitable conditions are imposed on $h(u(t),v_1(t),v_2(t))$. The objective function of the government is
$$
J(u(t),v_1(t),v_2(t))=\int_0^{T}e^{-\rho t}[\varpi h(u(t),v_1(t),v_2(t))-cx(t)-k_1v_1(t)-k_2v_2(t)-au(t)]dt+e^{-\rho T}kx(T),
$$
and the objective function of the terror organization is
$$
J_i(v_1(t),v_2(t))=\int_0^{T}e^{-\rho_i t}[\sigma_ix(t)+\beta_iv_i(t)]dt+e^{-\rho_i T}c_ix(T),
$$
where $\rho,\rho_i,c,c_i,k_i,\sigma_i,\beta_i$ ($i=1,2$) are all  positive constants. First, the government announce her counterterror measures $u(t)$. Then, a Nash game is considered for the two terror organizations, i.e., they would like to maximize their objective functions by
\begin{equation*}
\begin{cases}
J_1(v^*_1(t),v^*_2(t))=\sup\limits_{v_1(t)\geq 0}J_1(v_1(t),v^*_2(t)),\\
J_2(v^*_1(t),v^*_2(t))=\sup\limits_{v_2(t)\geq 0}J_2(v^*_1(t),v_2(t)).
\end{cases}
\end{equation*}
Finally, considering that the terror organizations would take strategy $(v^*_1,v^*_2)$, the government would like to maximize her objective function such that
$$
J(u^*(t),v^*_1(t),v^*_2(t))=\sup_{u(t)\geq 0}J(u(t),v^*_1(t),v^*_2(t)).
$$

Obviously, this is a problem of the Stackelberg game with one leader and two followers. Nevertheless, when an emergency happens and is made a top priority, the government has to stop her counterterror measures at a random time $\tau$ and deal with the emergency. In this case the objective function of the government becomes
$$
J(u(t),v_1(t),v_2(t))=\int_0^{\tau}e^{-\rho t}[\varpi h(u(t),v_1(t),v_2(t))-cx(t)-k_1v_1(t)-k_2v_2(t)-au(t)]dt+e^{-\rho \tau}ky(\tau),
$$
where $y(\tau)$ represents the discounted value of $x(\tau)$, i.e., the government suffers a loss at time $\tau\in[0,T]$. As a result,  the Stackelberg game with one leader and two followers mentioned above is nothing but a form MF-SSDG with one leader and two followers in which the leader is allowed to exit at a random time (for details, see Problem \ref{32} in Section 2).
\end{example}

Example \ref{87} tells us that, in some practical situations, it is necessary and important to consider the problem of MF-SSDG with random exit time.   The purpose of this paper is to investigate a new model of MF-SSDG with random exit time, which has never been yet explored in the previous literature. The main features of this paper can be summarized as follows: (i) Based on the technique of progressive enlargement of filtration, we construct a new model of a linear-quadratic mean-field stochastic Stackelberg differential game (LQ-MF-SSDG) with one leader and two followers, in which the leader is allowed to stop her strategy at a random time;  (ii) The state process in the model is governed by an MF-SDE divided into two-stage subequations by random exit time, in which the coefficients of subequations are allowed to be different for describing practical problems; (iii) By employing the backward induction method, the  Stackelberg solution is obtained for LQ-MF-SSDG with random exit time, even though random coefficient is involved in the mean-field term.

The rest of this paper is structured as follows.  In the next section, we give the formulation of LQ-MF-SSDG with random exit time. In Section 3, we derive the  Stackelberg solution for the leader and followers of LQ-MF-SSDG with random exit time. Finally, we make some concluding remarks in Section 4.

\section{Problem Formulation}
Consider the linear MF-SDE system in the complete probability space $(\Omega,\mathfrak{F},\mathfrak{F}_t,\mathbb{P})$ satisfying the usual hypothesis as follows:
\begin{equation}\label{SDE1}
\begin{cases}
dX(t)=\left[a(t)X(t)+\overline{a}(t)\overline{X}(t)+b_1(t)v_1(t)+b_2(t)v_2(t)+b(t)v_0(t)\right]dt\\
\qquad\qquad +\left[c(t)X(t)+\overline{c}(t)\overline{X}(t)\right]dB_t, \quad t\in[0,T],\\
X(0)=x_0,
\end{cases}
\end{equation}
where $T>0$ is a finite time duration; $\tau$ is a non-negative random variable, which represents the possible exit time of the leader; $X(t)$ is the state process; $v_0(t)$ is the the control process of the leader; $v_1(t)$ and $v_2(t)$ are the control processes of the followers $\mathcal{P}_1$ and $\mathcal{P}_2$, respectively; $\overline{X}(t)$ is the mean-field term with
$$
\overline{X}(t)=\mathbb{E}\left[X(t)\Big|\mathfrak{F}_{0}\right]\mathbb{I}_{t\in[0,\tau)}+\mathbb{E}\left[X(t)\Big|\mathfrak{F}_{\tau}\right]\mathbb{I}_{t\in[\tau,T]};
$$
the $\sigma$-algebra $\mathcal{F}=(\mathcal{F}_t)_{t\geq0}$, generated by the standard one-dimensional Brownian motion $B_t$, is right-continuous and increasing; $x_0$ is an $\mathcal{F}_0$-measurable and square integrable random variable; $\mathfrak{F}$ is the smallest right continuous extension in which $\tau_0$ becomes an $\mathfrak{F}$-stopping time, i.e.,
$\mathfrak{F}_{s}=\mathcal{F}_s \vee \sigma\left(\mathbb{I}_{\tau_0\leq u},u\in [0,s]\right)$, for all $s>0$.

Clearly, $\tau=\tau_0\wedge T\in[0,T]$ is also an $\mathfrak{F}$-stopping time. Noticing that when $\tau_0=\tau$ (i.e., the leader stops her action before the game ends), the controller $v_0$ contributes nothing on the time interval $(\tau,T]$. Thus,  it is natural to require that $b(t)=b_0(t)\mathbb{I}_{t\in[0,\tau]}$.

For convenience, we set $i=1,2$, $j=0,1,2$ and $\mathcal{L}_{\mathfrak{F}}^2(0,T;\mathbb{R})$ the space of all $\mathfrak{F}$-adapted and square integrable processes throughout this paper. Now we introduce the following objective functions.
\begin{defn}
The objective functions of the follower $\mathcal{P}_i$ and the leader are given by
\begin{equation}\label{33}
J_i(v_1,v_2,v_0)=-\frac{1}{2}\mathbb{E}\left[\int_0^{T}p_i(t)X^2(t)+\overline{p}_i(t)\overline{X}^2(t)+q_i(t)v_i^2(t)dt+r_i(T)X^2(T)\Big|\mathfrak{F}_{0}\right],
\end{equation}
and
\begin{equation}\label{34}
J_0(v_1,v_2,v_0)=-\frac{1}{2}\mathbb{E}\left[\int_0^{\tau}p_0(t)Y^2(t)+\overline{p}_0(t)\overline{Y}^2(t)+q_0(t)v_0^2(t)dt+r_0(\tau)Y^2(\tau)\Big|\mathfrak{F}_{0}\right],
\end{equation}
respectively. Here $Y(t)$ is the solution to the following MF-SDE with single jump at $t=\tau$.
\begin{equation}\label{31}
\begin{cases}
dY(t)=\left[a(t)Y(t)+\overline{a}(t)\overline{Y}(t)+b_1(t)v_1(t)+b_2(t)v_2(t)+b_0(t)v_0(t)\right]dt\\
\qquad\qquad + \left[c(t)Y(t)+\overline{c}(t)\overline{Y}(t)\right]dB_t+\left[d_0(t)Y(t)+\overline{d}_0(t)\overline{Y}(t)\right]d\mathbb{I}_{\tau\leq t}, \quad t\in(0,\tau],\\
Y(0)=x_0.
\end{cases}
\end{equation}
\end{defn}
\begin{remark}
It is easy to show that $Y(t)=X(t)$ a.s. for $t\in[0,\tau)$, while there is a penalty at time $\tau$ for the terminal gain of the leader.
\end{remark}
Next we introduce the admissible control sets and the optimal strategy problem.
\begin{prob}\label{32}
Consider the admissible control sets:
\begin{align*}
\mathcal{V}[0,T]=&\left\{v:[0,T]\times \Omega\rightarrow \mathbb{R}\Big|v(t)\;\mbox{is $\mathfrak{F}$-progressively measurable with}\; \mathbb{E}\left[\int_0^T|v(t)|^2dt\Big|\mathfrak{F}_{0}\right]<\infty\;\mbox{a.s.} \right\};\\
\mathcal{V}[0,\tau]=&\left\{v:[0,\tau]\times \Omega\rightarrow \mathbb{R}\Big|v(t)\;\mbox{is $\mathfrak{F}$-progressively measurable with}\; \mathbb{E}\left[\int_0^{\tau}|v(t)|^2dt\Big|\mathfrak{F}_{0}\right]<\infty\;\mbox{a.s.} \right\};\\
\mathcal{V}[\tau,T]=&\left\{v:[\tau,T]\times \Omega\rightarrow \mathbb{R}\Big|v(t)\;\mbox{is $\mathfrak{F}$-progressively measurable with}\; \mathbb{E}\left[\int_{\tau}^T|v(t)|^2dt\Big|\mathfrak{F}_{\tau}\right]<\infty\;\mbox{a.s.} \right\}.
\end{align*}
The followers would like to find the Nash equilibrium point $(v^*_1,v^*_2)\in \mathcal{V}[0,T]\times \mathcal{V}[0,T]$ such that
\begin{equation}\label{35}
\begin{cases}
J_1(v^*_1,v^*_2,v_0)=\mathop{\esssup}\limits_{v_{1} \in \mathcal{V}[0,T]}J_1(v_1,v^*_2,v_0),\\
J_2(v^*_1,v^*_2,v_0)=\mathop{\esssup}\limits_{v_{2} \in \mathcal{V}[0,T]}J_2(v^*_1,v_2,v_0).
\end{cases}
\end{equation}
 Considering that the followers would take strategy $(v^*_1,v^*_2)$, the leader expects to find $v^*_0\in \mathcal{V}[0,\tau]$ such that
\begin{equation}\label{38}
J_0(v^*_1,v^*_2,v^*_0)=\mathop{\esssup}\limits_{v_{0}\in \mathcal{V}[0,\tau]}J_0(v^*_1,v^*_2,v_0).
\end{equation}
\end{prob}
In order to obtain our main results, we make the following assumptions for MF-SDEs \eqref{SDE1}, \eqref{31}, and objective functions \eqref{33}, \eqref{34} throughout this paper.
\begin{assumption}
For $\theta\in\{a,\overline{a},c,\overline{c}\}$, $\vartheta_i\in\{b_i,p_i,\overline{p}_i,q_i,r_i\}$, suppose that $\theta$ and $\vartheta_i$ have the following decompositions.
$$
\theta(t)=\theta_1(t)\mathbb{I}_{t\in[0,\tau)}+\theta_2(t)\mathbb{I}_{t\in[\tau,T]},\quad
\vartheta_i(t)=\vartheta_{i,1}(t)\mathbb{I}_{t\in[0,\tau)}+\vartheta_{i,2}(t)\mathbb{I}_{t\in[\tau,T]}.
$$
In addition, suppose that the following conditions hold.
\begin{enumerate}[($A$)]
\item $a_i,\overline{a}_i,b_0,b_{i,1},b_{i,2},c_i,\overline{c}_i,d_0,\overline{d}_0$ are all deterministic, uniformly bounded and real-valued functions on $[0,T]$;

\item $p_0,\overline{p}_0,q_0,r_0,p_{i,j},\overline{p}_{i,j},q_{i,j},r_{i,j}$ are all deterministic, continuous and positive functions on $[0,T]$;

\item $q^{-1}_{1,i}b_{1,i}^2=q^{-1}_{2,i}b_{2,i}^2=l_i$;

\item (Hypothesis $(\mathcal{H})$) Every c\`{a}dl\`{a}g $\mathcal{F}$-martingale remains an $\mathfrak{F}$-martingale;

\item
Suppose that there exists an $\mathfrak{F}$-predictable (respectively,  $\mathcal{F}$-predictable) process $\gamma^{\mathfrak{F}}$ (respectively, $\gamma^{\mathcal{F}}$) with $\gamma^{\mathfrak{F}}(s)=\mathbb{I}_{s<\tau}\gamma^{\mathcal{F}}(s)$ such that
\begin{equation}\label{37}
A(t)=\mathbb{I}_{\tau\leq t}-\int_0^t\gamma^{\mathfrak{F}}(s)ds=\mathbb{I}_{\tau\leq t}-\int_0^{t\wedge \tau}\gamma^{\mathcal{F}}(s)ds \quad (t\in [0,T])
\end{equation}
is an $\mathfrak{F}$-martingale with jump time $\tau$. The process $\gamma^{\mathfrak{F}}$ (respectively, $\gamma^{\mathcal{F}}$) is called the $\mathfrak{F}$-intensity (respectively, $\mathcal{F}$-intensity) of $\tau$. In addition, we suppose that $\gamma^{\mathfrak{F}}$ is upper bounded.
\end{enumerate}
\end{assumption}

\begin{remark}
Concerning assumptions $(A)$-$(E)$, some remarks are listed as follows: (i) $(A)$ implies that for any $(v_0,v_1,v_2)\in\mathcal{V}[0,\tau]\times\mathcal{V}[0,T]\times\mathcal{V}[0,T]$, \eqref{SDE1} admits a unique solution $X(t)\in \mathcal{L}_{\mathfrak{F}}^2(0,T;\mathbb{R})$; (ii) $(B)$ ensures the concavity of the objective functionals; (iii) $(C)$ ensures the well-posedness of related Riccati equations for followers; (iv) $(D)$ and $(E)$ are classical assumptions in the theory of progressive enlargement.
\end{remark}

\section{Main results}
In this section, we focus on searching the Stackelberg solution of LQ-MF-SSDG with random exit time, which is closely related to the optimal control for SDEs with default (with respect to the study of SDEs with default, we refer the reader to \cite{Bachir2020Stochastic, Kharroubi2014Progressive, Peng2009BSDE}). Motivated by the backward induction method (see, for example, \cite{gou2020optimal, Pham2010Stochastic}), we first decompose the state equation into the following two-stage equations on $t\in[0,\tau]$ (the first stage) and $t\in[\tau,T]$ (the second stage):
\begin{equation*}
\begin{cases}
dX_1(t)=\left[a_1(t)X_1(t)+\overline{a}_1(t)\overline{X}_1(t)+b_{1,1}(t)v_{1,1}(t)+b_{2,1}(t)v_{2,1}(t)+b_0(t)v_{0}(t)\right]dt\\
\qquad\qquad\;+\left[c_1(t)X_1(t)+\overline{c}_1(t)\overline{X}_1(t)\right]dB_t, \quad t\in(0,\tau],\\
dX_2(t)=\left[a_2(t)X_2(t)+\overline{a}_2(t)\overline{X}_2(t)+b_{1,2}(t)v_{1,2}(t)+b_{2,2}(t)v_{2,2}(t)\right]dt\\
\qquad\qquad\;+\left[c_2(t)X_2(t)+\overline{c}_2(t)\overline{X}_2(t)\right]dB_t, \quad t\in(\tau,T],\\
X_1(0)=x_0,\quad X_2(\tau)=X_1(\tau).
\end{cases}
\end{equation*}
We note that Lemma 2.1 of \cite{Pham2010Stochastic} ensures the rationality of the above decomposition. Further, we decompose the controls of the followers as follows:
$$
v_i(t)=v_{i,1}(t)\mathbb{I}_{t\in[0,\tau)}+v_{i,2}(t)\mathbb{I}_{t\in[\tau,T]}.
$$
 Next, for fixed $v_0\in\mathcal{V}[0,\tau]$, we aim to find $(v^*_{1,2},v^*_{2,2})\in \mathcal{V}[\tau,T]\times \mathcal{V}[\tau,T]$ such that
\begin{equation}\label{22}
\begin{cases}
\quad\; J_{1,2}(v^*_{1,2},v^*_{2,2})=\mathop{\esssup}\limits_{v_{1,2} \in \mathcal{V}[\tau,T]}J_{1,2}(v_{1,2},v^*_{2,2})\\
=\mathop{\esssup}  \limits_{v_{1,2} \in \mathcal{V}[\tau,T]}\Big\{-\frac{1}{2}\mathbb{E}\left[\int_{\tau}^{T}p_{1,2}(t)X_2^2(t)+\overline{p}_{1,2}(t)\overline{X}_2^2(t)+q_{1,2}(t)v_{1,2}^2(t)dt+r_{1}(T)X_2^2(T)\Big|\mathfrak{F}_{\tau} \right]\Big\},\\
\quad\; J_{2,2}(v^*_{1,2},v^*_{2,2})=\mathop{\esssup}\limits_{v_{2,2} \in \mathcal{V}[\tau,T]}J_{2,2}(v^*_{1,2},v_{2,2})\\
=\mathop{\esssup}  \limits_{v_{2,2} \in \mathcal{V}[\tau,T]}\Big\{-\frac{1}{2}\mathbb{E}\left[\int_{\tau}^{T}p_{2,2}(t)X_2^2(t)+\overline{p}_{2,2}(t)\overline{X}_2^2(t)+q_{2,2}(t)v_{2,2}^2(t)dt+r_{2}(T)X_2^2(T)\Big|\mathfrak{F}_{\tau} \right]\Big\},
\end{cases}
\end{equation}
and $(v^*_{1,1},v^*_{2,1})\in \mathcal{V}[0,\tau]\times \mathcal{V}[0,\tau]$ such that
\begin{equation}\label{23}
\begin{cases}
\quad\; J_{1,1}(v^*_{1,1},v^*_{2,1},v_0)=\mathop{\esssup}\limits_{v_{1,1} \in \mathcal{V}[0,\tau]}J_{1,1}(v_{1,1},v^*_{2,1},v_0)\\
=\mathop{\esssup}  \limits_{v_{1,1} \in \mathcal{V}[0,\tau]}\mathbb{E}\Big\{\int_0^{\tau}-\frac{1}{2}\left[p_{1,1}(t)X_1^2(t)+\overline{p}_{1,1}(t)\overline{X}_2^2(t)+q_{1,1}(t)v_{1,1}^2(t)\right]dt+J_{1,2}(X_2^*(t),v^*_{1,2}(t))\Big|\mathfrak{F}_{0} \Big\},\\
\quad\; J_{2,1}(v^*_{1,1},v^*_{2,1},v_0)=\mathop{\esssup}\limits_{v_{2,1} \in \mathcal{V}[0,\tau]}J_{2,1}(v^*_{1,1},v_{2,1},v_0)\\
=\mathop{\esssup}  \limits_{v_{2,1} \in \mathcal{V}[0,\tau]}\mathbb{E}\Big\{\int_0^{\tau}-\frac{1}{2}\left[p_{2,1}(t)X_1^2(t)+\overline{p}_{2,1}(t)\overline{X}_2^2(t)+q_{2,1}(t)v_{2,1}^2(t)\right]dt+J_{2,2}(X_2^*(t),v^*_{2,2}(t))\Big|\mathfrak{F}_{0} \Big\}.
\end{cases}
\end{equation}
Let
$$
\widetilde{v}_i(t)=v^*_{i,1}(t)\mathbb{I}_{t\in[0,\tau)}+v^*_{i,2}(t)\mathbb{I}_{t\in[\tau,T]}.
$$
According to \eqref{35} and Theorem 4.1 in \cite{gou2020optimal}, we know that $(\widetilde{v}_1,\widetilde{v}_2)\in \mathcal{V}[0,T]\times\mathcal{V}[0,T]$ is the global Nash equilibrium point, i.e., $(\widetilde{v}_1,\widetilde{v}_2)=(v_1^*,v_2^*)$. Moreover,
$$
J_i(v^*_1,v^*_2,v_0)=J_{i,1}(v^*_{1,1},v^*_{2,1},v_0).
$$

Assume that followers take strategy $(\widetilde{v}_1,\widetilde{v}_2)$, we proceed to find the optimal strategy $v_0^*$ for the leader.
In the following subsections, we solve $(v^*_{1,2},v^*_{2,2})$, $(v^*_{1,1},v^*_{2,1})$ and $v_0^*$ in the sequence.
\subsection{Nash equilibrium for followers at the second stage}
In this subsection, we apply the Pontryagin-type maximum principle to find Nash equilibrium point $(v^*_{1,2},v^*_{2,2})$ for followers at the second stage. We restate problem \eqref{22} as follows.
\begin{subprob} \label{39}
Find $(v^*_{1,2},v^*_{2,2})\in \mathcal{V}[\tau,T]\times \mathcal{V}[\tau,T]$ such that
\begin{align*}
J_{1,2}(v^*_{1,2},v^*_{2,2})&=\mathop{\esssup}\limits_{v_{1,2} \in \mathcal{V}[\tau,T]}J_{1,2}(v_{1,2},v^*_{2,2}),\\
J_{2,2}(v^*_{1,2},v^*_{2,2})&=\mathop{\esssup}\limits_{v_{2,2} \in \mathcal{V}[\tau,T]}J_{2,2}(v^*_{1,2},v_{2,2}).
\end{align*}
\end{subprob}

To solve this subproblem, we define the Hamiltonian function of follower $\mathcal{P}_i$ at second stage by
\begin{align*}
H_{i,2}&:[\tau,T]\times \mathbb{R}\times \mathbb{R}\times \mathbb{R}\times \mathbb{R}\times \mathbb{R}\times \mathbb{R}\rightarrow \mathbb{R},\\
H_{i,2}&=H_{i,2}(t,X_2,\overline{X}_2,v_{1,2},v_{2,2},P_{v_{i,2}},Q_{v_{i,2}})\\
&=-\frac{1}{2}\left[p_{i,2}X_2^2+\overline{p}_{i,2}\overline{X}_2^2+q_{i,2}v_{i,2}^2\right]+\left[a_2X_2+\overline{a}_2\overline{X}_2+b_{1,2}v_{1,2}+b_{2,2}v_{2,2}\right]P_{v_{i,2}}\\
&\quad\mbox{}\,+\left[c_2X_2+\overline{c}_2\overline{X}_2\right]Q_{v_{i,2}}.
\end{align*}
By the sufficient and necessary maximum principle of MF-SDEs with default (see \cite{gou2020optimal}), we have
$$
0=\frac{\partial H_{i,2}}{\partial v_{i,2}}\Big|_{v_{i,2}=v^*_{i,2}}=-q_{i,2}v^*_{i,2}+b_{i,2}P_{v^*_{i,2}}\quad \Rightarrow \quad v^*_{i,2}=q^{-1}_{i,2}b_{i,2}P_{v^*_{i,2}},
$$
where $P_{v^*_{i,2}}$ is the solution to corresponding linear BSDE. This leads to the following theorem.

\begin{thm}\label{7}
$(v^*_{1,2},v^*_{2,2})$ is a Nash equilibrium point for Subproblem \ref{39} if and only if
\begin{equation}\label{80}
(v^*_{1,2},v^*_{2,2})=(q^{-1}_{1,2}b_{1,2}P_{v^*_{1,2}},q^{-1}_{2,2}b_{2,2}P_{v^*_{2,2}}),
\end{equation}
where the triple $(X_2^*,P_{v^*_{i,2}},Q_{v^*_{i,2}})$ satisfies the following mean-field forward-backward stochastic differential equation (MF-FBSDE):
\begin{equation}\label{1}
\begin{cases}
dX^*_2=\left[a_2X^*_2+\overline{a}_2\overline{X}^*_2+q^{-1}_{1,2}b_{1,2}^2P_{v^*_{1,2}}+q^{-1}_{2,2}b_{2,2}^2P_{v^*_{2,2}}\right]dt+\left[c_2X^*_2+\overline{c}_2\overline{X}^*_2\right]dB_t,\quad t\in(\tau,T],\\
dP_{v^*_{i,2}}=\left[p_{i,2}X^*_2+\overline{p}_{i,2}\overline{X}^*_2-a_2P_{v^*_{i,2}}-\overline{a}_2\overline{P}_{v^*_{i,2}}-c_2Q_{v^*_{i,2}}-\overline{c}_2\overline{Q}_{v^*_{i,2}}\right]dt+Q_{v^*_{i,2}}dB_t, \quad t\in[\tau,T),\\
X^*_2(\tau)=X_1(\tau),\quad P_{v^*_{i,2}}(T)=-r_{i}(T)X^*_2(T).
\end{cases}
\end{equation}
\end{thm}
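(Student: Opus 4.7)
My plan is to derive both directions of the equivalence from a player-by-player application of the sufficient and necessary Pontryagin maximum principle for mean-field SDEs, exactly in the form invoked from reference \cite{gou2020optimal}. The key observation is that the Nash condition decouples into two separate optimization problems: for each $i\in\{1,2\}$, fixing the opponent's strategy $v^*_{3-i,2}$, player $\mathcal{P}_i$ solves a one-controller mean-field linear-quadratic problem on $[\tau,T]$ with initial datum $X_1(\tau)$, which is $\mathfrak{F}_\tau$-measurable. Under Assumption $(A)$ the coefficients are bounded, so the controlled state equation for $X_2$ has a unique square-integrable solution for any $v_{i,2}\in\mathcal{V}[\tau,T]$, and under Assumption $(B)$ the cost functional is strictly concave.

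For the necessity direction, I would assume $(v^*_{1,2},v^*_{2,2})$ is a Nash equilibrium. Fixing $v^*_{3-i,2}$ and viewing $v^*_{i,2}$ as optimal for $J_{i,2}(\cdot,v^*_{3-i,2})$, the necessary maximum principle produces an adjoint pair $(P_{v^*_{i,2}},Q_{v^*_{i,2}})$ solving precisely the BSDE in \eqref{1}, whose drift is $-\partial_{X_2} H_{i,2}-\mathbb{E}[\partial_{\overline{X}_2}H_{i,2}\,|\,\mathfrak{F}_\tau]$ written out, and whose terminal condition $-r_i(T)X_2^*(T)$ comes from differentiating the terminal penalty. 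The maximum condition then forces $\partial_{v_{i,2}}H_{i,2}|_{v^*_{i,2}}=0$; since $H_{i,2}$ is quadratic in $v_{i,2}$ with leading coefficient $-q_{i,2}/2<0$, this first-order condition is equivalent to $v^*_{i,2}=q_{i,2}^{-1}b_{i,2}P_{v^*_{i,2}}$. Coupling the two forward dynamics and the two backward equations gives the MF-FBSDE \eqref{1}.

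For the sufficiency direction, suppose $(X_2^*,P_{v^*_{i,2}},Q_{v^*_{i,2}})$ solves \eqref{1} and define $v^*_{i,2}$ by \eqref{80}. To verify the Nash property for player $i$, I would fix $v^*_{3-i,2}$, pick an arbitrary $v_{i,2}\in\mathcal{V}[\tau,T]$ with induced state $X_2$, and expand $J_{i,2}(v_{i,2},v^*_{3-i,2})-J_{i,2}(v^*_{i,2},v^*_{3-i,2})$. Applying It\^o's formula to $P_{v^*_{i,2}}(X_2-X_2^*)$ on $[\tau,T]$, using the terminal condition to absorb the $r_i(T)$ boundary term and the BSDE drift to match the $p_{i,2},\overline{p}_{i,2}$ terms, the difference collapses to $\mathbb{E}[\int_\tau^T(H_{i,2}(\cdot,v_{i,2},v^*_{3-i,2},\cdot)-H_{i,2}(\cdot,v^*_{i,2},v^*_{3-i,2},\cdot))\,dt\,|\,\mathfrak{F}_\tau]$ plus a nonpositive concavity remainder. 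Because $H_{i,2}$ is concave (indeed quadratic concave) in $(X_2,\overline{X}_2,v_{i,2})$ and $v^*_{i,2}$ satisfies the first-order condition, this difference is $\le 0$, giving optimality and hence the Nash property.

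The main obstacle, conceptually, is the circular coupling: each player's adjoint equation involves the other's adjoint through the common forward state $X_2^*$, and $X_2^*$ in turn appears linearly in both drivers of both BSDEs via the feedback $v^*_{i,2}=q_{i,2}^{-1}b_{i,2}P_{v^*_{i,2}}$. For the present statement this only shows up as the requirement that the MF-FBSDE \eqref{1} admit a solution whose components are admissible, which is a prerequisite rather than part of the equivalence itself; well-posedness will be settled in the next subsection through the Riccati decoupling enabled by Assumption $(C)$ (the symmetry $q_{1,i}^{-1}b_{1,i}^2=q_{2,i}^{-1}b_{2,i}^2=l_i$). Another minor technical point is ensuring that all conditional expectations are taken with respect to $\mathfrak{F}_\tau$ rather than $\mathfrak{F}_0$, so that the mean-field term $\overline{X}_2(t)=\mathbb{E}[X_2(t)\,|\,\mathfrak{F}_\tau]$ is handled correctly in the stochastic calculus on $[\tau,T]$; this is where Hypothesis $(\mathcal{H})$ in Assumption $(D)$ is invoked to preserve the martingale property of $B$ under $\mathfrak{F}$.
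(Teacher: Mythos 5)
Your proposal is correct and follows essentially the same route as the paper: the paper likewise defines the Hamiltonian $H_{i,2}$, invokes the sufficient and necessary maximum principle for MF-SDEs with default from the cited reference, and reads off the first-order condition $-q_{i,2}v^*_{i,2}+b_{i,2}P_{v^*_{i,2}}=0$ together with the adjoint MF-FBSDE \eqref{1}. Your write-up merely fills in the details (the It\^{o}/concavity verification for sufficiency and the player-by-player decoupling) that the paper delegates to that reference.
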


According to \eqref{23}, before solving the Nash equilibrium point for followers at the first stage, we need to compute the optimal objective subfunctions $J_{1,2}(X_2^*(t),v^*_{1,2}(t))$ and $J_{2,2}(X_2^*(t),v^*_{2,2}(t))$. First, we focus on the feedback representation of the solutions to \eqref{1}.

Observing the terminal condition of $P_{v^*_{i,2}}$, we set
\begin{equation}\label{3}
P_{v^*_{i,2}}(t)=\varphi_{i,1}(t)X^*_2(t)+\varphi_{i,2}(t)\overline{X}^*_2(t)+\varphi_{i,0}(t),
\end{equation}
where $\varphi_{i,1}$, $\varphi_{i,2}$ are deterministic functions, $\varphi_{i,0}$ is an $\mathfrak{F}$-adapted process satisfying
$$
d\varphi_{i,0}(t)=\varphi_{i,3}(t)dt+\varphi_{i,4}(t)dB_t,\quad \varphi_0(T)=0.
$$
Taking conditional expectation with respect to $\mathfrak{F}_{\tau}$ in \eqref{1}, one has
$$
d\overline{X}^*_2=\left[(a_2+\overline{a}_2)\overline{X}^*_2+q^{-1}_{1,2}b_{1,2}^2\overline{P}_{v^*_{1,2}}+q^{-1}_{2,2}b_{2,2}^2\overline{P}_{v^*_{2,2}}\right]dt.
$$
Computing the differential of $P_{v^*_{i,2}}$, we have
\begin{align}\label{8}
dP_{v^*_{i,2}}&=\varphi'_{i,1}X^*_2dt+\varphi_{i,1}dX^*_2+\varphi'_{i,2}\overline{X}^*_2dt+\varphi_{i,2}d\overline{X}^*_2+\varphi'_{i,0}dt\nonumber\\
&=\left[\varphi'_{i,1}+\varphi_{i,1}\left(a_2+q^{-1}_{1,2}b_{1,2}^2\varphi_{1,1}+q^{-1}_{2,2}b_{2,2}^2\varphi_{2,1}\right)\right]X^*_2dt\nonumber
\\
&\quad\mbox{}+\Big\{\varphi'_{i,2}+\varphi_{i,1}\Big[\overline{a}_2+q^{-1}_{1,2}b_{1,2}^2\varphi_{1,2}+q^{-1}_{2,2}b_{2,2}^2\varphi_{2,2}\Big]\nonumber\\
&\quad\mbox{}+\varphi_{i,2}\Big[a_2+\overline{a}_2+q^{-1}_{1,2}b_{1,2}^2(\varphi_{1,1}+\varphi_{1,2})+q^{-1}_{2,2}b_{2,2}^2(\varphi_{2,1}+\varphi_{2,2})\Big]\Big\}\overline{X}^*_2dt
\nonumber\\
&\quad\mbox{}+\Big[\varphi_{i,3}+\varphi_{i,1}\left(q^{-1}_{1,2}b_{1,2}^2\varphi_{1,0}+q^{-1}_{2,2}b_{2,2}^2\varphi_{2,0}\right)+\varphi_{i,2}\left(q^{-1}_{1,2}b_{1,2}^2\overline{\varphi}_{1,0}+q^{-1}_{2,2}b_{2,2}^2\overline{\varphi}_{2,0}\right)\Big]dt\nonumber\\
&\quad\mbox{}+\left[\varphi_{i,1}\left(c_2X^*_2+\overline{c}_2\overline{X}^*_2\right)+\varphi_{i,4}\right]dB_t.
\end{align}
It follows from \eqref{8} and \eqref{1} that
\begin{equation*}
Q_{v^*_{i,2}}=\varphi_{i,1}\left(c_2X^*_2+\overline{c}_2\overline{X}^*_2\right)+\varphi_{i,4}.
\end{equation*}
Thus the second equation in \eqref{1} can be rewritten as
\begin{align}\label{11}
dP_{v^*_{i,2}}&=\left[p_{i,2}-a_2\varphi_{i,1}-\varphi_{i,1}c_2^2\right]X^*_2dt+\Big[\overline{p}_{i,2}-a_2\varphi_{i,2}-\bar{a}_2\left(\varphi_{i,1}+\varphi_{i,2}\right)
-\varphi_{i,1}\left(\overline{c}_2^2+2c_2\overline{c}_2\right)\Big]\overline{X}_2^*dt\nonumber\\
&\quad\mbox{}\;-\left(a_2\varphi_{i,0}+\overline{a}_2\overline{\varphi}_{i,0}+c_2\varphi_{i,4}+\overline{c}_2\overline{\varphi}_{i,4}\right)dt+\left[\varphi_{i,1}\left(c_2X^*_2+\overline{c}_2\overline{X}^*_2\right)+\varphi_{i,4}\right]dB_t.
\end{align}
Combining \eqref{8}, \eqref{11} and Assumption $(C)$, we can see that $(\varphi_{1,1},\varphi_{1,2},\varphi_{2,1},\varphi_{2,2})$ is a solution to the following system of ordinary differential equations (ODEs):
\begin{equation}\label{9}
\begin{cases}
\varphi'_{1,1}+(2a_2+c^2_2)\varphi_{1,1}+l_2\varphi^2_{1,1}+l_2\varphi_{1,1}\varphi_{2,1}-p_{1,2}=0, \quad t\in[\tau,T),\\
\varphi'_{2,1}+(2a_2+c^2_2)\varphi_{2,1}+l_2\varphi_{1,1}\varphi_{2,1}+l_2\varphi^2_{2,1}-p_{2,2}=0, \quad t\in[\tau,T),\\
\varphi'_{1,2}+\left(2a_2+2\overline{a}_2+2l_2\varphi_{1,1}+l_2\varphi_{2,1}\right)\varphi_{1,2}+l_2\varphi^2_{1,2}+l_2\varphi_{1,2}\varphi_{2,2}\\
\quad\;\;\,\mbox{}+l_2\varphi_{1,1}\varphi_{2,2}+\varphi_{1,1}\Big[2\overline{a}_2+\overline{c}_2^2+2c_2\overline{c}_2\Big]-\overline{p}_{1,2}=0,
\quad t\in[\tau,T),\\
\varphi'_{2,2}+\left(2a_2+2\overline{a}_2+2l_2\varphi_{2,1}+l_2\varphi_{1,1}\right)\varphi_{2,2}+l_2\varphi^2_{2,2}+l_2\varphi_{1,2}\varphi_{2,2}\\
\quad\;\;\,\mbox{}+l_2\varphi_{2,1}\varphi_{1,2}+\varphi_{2,1}\Big[2\overline{a}_2+\overline{c}_2^2+2c_2\overline{c}_2\Big]-\overline{p}_{2,2}=0,
\quad t\in[\tau,T),\\
\varphi_{1,1}(T)=-r_{1}(T),\quad \varphi_{1,2}(T)=0,\quad \varphi_{2,1}(T)=-r_{2}(T),\quad \varphi_{2,2}(T)=0.
\end{cases}
\end{equation}
Moreover, $((\varphi_{1,0},\varphi_{1,4}),(\varphi_{2,0},\varphi_{2,4}))$ is a solution to the following system of mean-field backward stochastic differential equations (MF-BSDEs):
\begin{equation}\label{10}
\begin{cases}
d\varphi_{1,0}=-\Big[\left(a_2+l_2\varphi_{1,1}\right)\varphi_{1,0}+\left(\overline{a}_2+l_2\varphi_{1,2}\right)\overline{\varphi}_{1,0}+c_2\varphi_{1,4}+\overline{c}_2\overline{\varphi}_{1,4}\\
\qquad\quad\,\,\mbox{}+l_2\left(\varphi_{1,1}\varphi_{2,0}+\varphi_{1,2}\overline{\varphi}_{2,0}\right)\Big]dt+\varphi_{1,4}dB_t,
\quad t\in[\tau,T),\\
d\varphi_{2,0}=-\Big[\left(a_2+l_2\varphi_{2,1}\right)\varphi_{2,0}+\left(\overline{a}_2+l_2\varphi_{2,2}\right)\overline{\varphi}_{2,0}+c_2\varphi_{2,4}+\overline{c}_2\overline{\varphi}_{2,4}\\
\qquad\quad\,\,\mbox{}+l_2\left(\varphi_{2,1}\varphi_{1,0}+\varphi_{2,2}\overline{\varphi}_{1,0}\right)\Big]dt+\varphi_{2,4}dB_t,
\quad t\in[\tau,T),\\
\varphi_{1,0}(T)=\varphi_{2,0}(T)=0.
\end{cases}
\end{equation}

Now we show the existence and uniqueness of solutions to \eqref{9} and \eqref{10}, respectively.

\begin{lemma}
There exists a unique solution $(\varphi_{1,1},\varphi_{1,2},\varphi_{2,1},\varphi_{2,2})$ to \eqref{9}.
\end{lemma}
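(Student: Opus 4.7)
The plan is to exploit the block structure of \eqref{9}: equations (1)--(2) for $(\varphi_{1,1}, \varphi_{2,1})$ decouple from (3)--(4), so I would first solve (1)--(2) and then substitute the result as known data into (3)--(4). In each block the two equations are symmetric under swapping the two components (up to the data $p_{i,2}$ and $r_i(T)$), which motivates a sum--difference reduction.

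For the first block, set $S_1 := \varphi_{1,1} + \varphi_{2,1}$ and $D_1 := \varphi_{1,1} - \varphi_{2,1}$. Adding (1) and (2), the quadratic cross terms collapse via $l_2\varphi_{1,1}^2 + 2 l_2 \varphi_{1,1}\varphi_{2,1} + l_2\varphi_{2,1}^2 = l_2 S_1^2$ into the scalar Riccati equation
\begin{equation*}
S_1' + (2a_2 + c_2^2)\, S_1 + l_2 S_1^2 = p_{1,2} + p_{2,2}, \qquad S_1(T) = -(r_1 + r_2)(T),
\end{equation*}
while subtracting and using $\varphi_{1,1}^2 - \varphi_{2,1}^2 = S_1 D_1$ yields a \emph{linear} ODE for $D_1$ whose coefficients depend only on the (once-known) $S_1$. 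To secure global existence and uniqueness of $S_1$ on $[\tau, T]$, I would apply the Cole--Hopf type substitution $S_1 = \phi'/(l_2 \phi)$, which converts the Riccati into the second-order linear ODE $\phi'' + (2a_2+c_2^2)\phi' - l_2(p_{1,2}+p_{2,2})\phi = 0$, and then use Assumption $(B)$ (positivity of $p_{i,j}, r_i$) together with $l_2 \geq 0$ from Assumption $(C)$ to argue that $\phi$ cannot vanish on $[\tau, T]$. Equivalently, running backward in time and writing the reversed ODE as $\tilde S'(\sigma) = l_2 \tilde S^2 + (2a_2+c_2^2)\tilde S - (p_{1,2}+p_{2,2})$, its quadratic right-hand side has two real roots $R_- < 0 < R_+$, and the initial value $\tilde S(0) = -(r_1+r_2)(T) < R_+$ prevents escape to $+\infty$. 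Standard linear ODE theory then yields a unique continuous $D_1$, and we recover $\varphi_{1,1}, \varphi_{2,1} = (S_1 \pm D_1)/2$.

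For the second block, set $S_2 := \varphi_{1,2} + \varphi_{2,2}$ and $D_2 := \varphi_{1,2} - \varphi_{2,2}$. A direct computation, using $\varphi_{1,2}^2 + 2\varphi_{1,2}\varphi_{2,2} + \varphi_{2,2}^2 = S_2^2$ and $\varphi_{1,1}\varphi_{2,2} + \varphi_{2,1}\varphi_{1,2} = (S_1 S_2 - D_1 D_2)/2$, shows that the sum equation reduces to a scalar Riccati equation in $S_2$ with coefficients built from the now-known $S_1$, of the form $S_2' + (2a_2+2\overline{a}_2+2l_2 S_1) S_2 + l_2 S_2^2 = \overline{p}_{1,2}+\overline{p}_{2,2} - S_1(2\overline{a}_2+\overline{c}_2^2+2c_2\overline{c}_2)$ with $S_2(T)=0$, while the difference equation is a \emph{linear} ODE in $D_2$ whose coefficients involve only $S_1, D_1, S_2$. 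The same Riccati argument as above furnishes a unique $S_2$, linear ODE theory a unique $D_2$, and we recover $\varphi_{1,2}, \varphi_{2,2} = (S_2 \pm D_2)/2$.

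The main obstacle throughout is ruling out finite-time blow-up of the two scalar Riccati equations for $S_1$ and $S_2$. This is precisely the point at which Assumption $(B)$ (strict positivity of $p_{i,j}, \overline{p}_{i,j}, r_i$) and $l_2 \geq 0$ from Assumption $(C)$ are used essentially: they pin down the sign structure of the linearized ODE (or, equivalently, guarantee that the terminal data lies on the bounded side of the unstable root), so that the Riccati solutions remain bounded on $[\tau, T]$. Uniqueness of $(\varphi_{1,1},\varphi_{1,2},\varphi_{2,1},\varphi_{2,2})$ then propagates along the chain $S_1 \to D_1 \to S_2 \to D_2$.
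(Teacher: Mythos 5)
Your proposal is correct and takes essentially the same route as the paper: sum the paired equations to obtain a scalar Riccati equation for $\varphi_{1,j}+\varphi_{2,j}$, after which the remaining unknowns satisfy linear ODEs (the paper substitutes the known sum back into each original equation rather than forming the difference $D_j$, but the two reductions are equivalent). The only difference is that the paper settles global solvability of the Riccati equations by citing Proposition 7.1 of Yong and Zhou, whereas you supply your own comparison/Cole--Hopf argument.
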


\begin{proof}
Set $\varphi_{j}=\varphi_{1,j}+\varphi_{2,j}$. Then, $\varphi_{1}$ satisfies the following Riccati equation:
\begin{equation}\label{12}
\begin{cases}
\varphi'_{1}+(2a_2+c^2_2)\varphi_{1}+l_2\varphi^2_{1}-(p_{1,2}+p_{2,2})=0, \quad t\in[\tau,T),\\
\varphi_1(T)=-r_{1}(T)-r_{2}(T).
\end{cases}
\end{equation}
By Proposition 7.1 in \cite{Yong2009Stochastic}, there exists a unique solution to \eqref{12}. Therefore, \eqref{9} can be transformed into the following  system of  linear equations:
\begin{equation*}
\begin{cases}
\varphi'_{1,1}+(2a_2+c^2_2)\varphi_{1,1}+l_2\varphi_{1,1}\varphi_{1}-p_{1,2}=0, \quad t\in[\tau,T),\\
\varphi'_{2,1}+(2a_2+c^2_2)\varphi_{2,1}+l_2\varphi_{2,1}\varphi_{1}-p_{2,2}=0, \quad t\in[\tau,T),\\
\varphi_{1,1}(T)=-r_{1}(T),\quad \varphi_{2,1}(T)=-r_{2}(T),
\end{cases}
\end{equation*}
where $\varphi_{1}$ is the unique solution to \eqref{12}. The existence and uniqueness of solutions $\varphi_{1,1}$ and $\varphi_{2,1}$ is obtained immediately.

Similarly, $\varphi_{2}$ satisfies the following Riccati equation:
\begin{equation*}
\begin{cases}
\varphi'_{2}+\left(2a_2+2\overline{a}_2+2l_2\varphi_{1}\right)\varphi_{2}+l_2\varphi^2_{2}+\varphi_{1}\Big[2\overline{a}_2+\overline{c}_2^2+2c_2\overline{c}_2\Big]-(\overline{p}_{1,2}+\overline{p}_{2,2})=0, \quad t\in[\tau,T),\\
\varphi_2(T)=0.
\end{cases}
\end{equation*}
Repeating the above arguments, we can obtain the existence and uniqueness of solutions $\varphi_{2,1}$ and $\varphi_{2,2}$.
\end{proof}

\begin{lemma}
There exists a unique $\mathfrak{F}_t$-adapted solution $((\varphi_{1,0},\varphi_{1,4}),(\varphi_{2,0},\varphi_{2,4}))$ to \eqref{10}.
\end{lemma}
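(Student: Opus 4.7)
The plan is to recognize \eqref{10} as a coupled linear mean-field BSDE on $[\tau,T]$ with zero terminal data, and to dispatch it by the standard well-posedness theorem for MF-BSDEs, once all coefficients are checked to be uniformly bounded.

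First, I would stack the unknowns into $\mathbf{Y}=(\varphi_{1,0},\varphi_{2,0})$ and $\mathbf{Z}=(\varphi_{1,4},\varphi_{2,4})$, and rewrite \eqref{10} in the abstract form
\begin{equation*}
-d\mathbf{Y}(t)=\bigl[\mathbf{A}(t)\mathbf{Y}(t)+\mathbf{B}(t)\overline{\mathbf{Y}}(t)+\mathbf{C}(t)\mathbf{Z}(t)+\mathbf{D}(t)\overline{\mathbf{Z}}(t)\bigr]dt-\mathbf{Z}(t)\,dB_t,\qquad \mathbf{Y}(T)=\mathbf{0},
\end{equation*}
where the $2\times2$ matrix coefficients $\mathbf{A},\mathbf{B},\mathbf{C},\mathbf{D}$ are built from $a_2,\overline{a}_2,c_2,\overline{c}_2,l_2$ and the deterministic functions $\varphi_{i,j}$ ($i,j=1,2$). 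By Assumption $(A)$, $a_2,\overline{a}_2,c_2,\overline{c}_2$ are uniformly bounded, and by the previous lemma each $\varphi_{i,j}$ is continuous on the compact interval $[\tau,T]$ and hence also uniformly bounded; consequently $\mathbf{A},\mathbf{B},\mathbf{C},\mathbf{D}$ are deterministic and bounded.

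Therefore the generator is globally Lipschitz in $(\mathbf{Y},\overline{\mathbf{Y}},\mathbf{Z},\overline{\mathbf{Z}})$ with deterministic bounded constants, and the terminal datum $\mathbf{0}$ is trivially in $L^{2}(\mathfrak{F}_T;\mathbb{R}^{2})$. The standard well-posedness theorem for mean-field BSDEs (in the spirit of Buckdahn--Li--Peng) then yields a unique $\mathfrak{F}$-adapted pair $(\mathbf{Y},\mathbf{Z})\in\mathcal{L}_{\mathfrak{F}}^{2}(\tau,T;\mathbb{R}^{2})\times\mathcal{L}_{\mathfrak{F}}^{2}(\tau,T;\mathbb{R}^{2})$ via a Banach fixed-point argument on a suitably weighted $L^{2}$ norm; under Hypothesis $(\mathcal{H})$ of Assumption $(D)$, $B$ remains an $\mathfrak{F}$-martingale so that the martingale-representation step goes through unchanged in the progressively enlarged filtration.

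Finally, I would observe that the generator carries no inhomogeneous term, so the quadruple $(\varphi_{1,0},\varphi_{1,4},\varphi_{2,0},\varphi_{2,4})\equiv(0,0,0,0)$ is manifestly a solution; combined with the uniqueness above, this produces both parts of the lemma at once. The only mildly delicate point will be checking that the mean-field conditioning $\overline{\varphi}_{i,0}=\mathbb{E}[\varphi_{i,0}\mid\mathfrak{F}_{\tau}]$ on the post-default interval interacts well with the fixed-point map, which follows from the $L^{2}$-contractivity of conditional expectation together with the boundedness of $\mathbf{B}$ and $\mathbf{D}$.
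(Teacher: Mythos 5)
Your proposal is correct and takes essentially the same route as the paper: stack the unknowns into a two-dimensional linear mean-field BSDE with bounded deterministic coefficients and zero terminal datum, and invoke standard well-posedness for linear MF-BSDEs. Your additional observation that the generator is homogeneous, so that uniqueness forces $(\varphi_{1,0},\varphi_{1,4},\varphi_{2,0},\varphi_{2,4})\equiv(0,0,0,0)$, is valid and in fact sharper than what the paper records.
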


\begin{proof}
Set
\begin{align*}
&\alpha_2=
\begin{bmatrix}
\varphi_{1,0}\\
\varphi_{2,0}
\end{bmatrix},\quad
\beta_2=
\begin{bmatrix}
\varphi_{1,4}\\
\varphi_{2,4}
\end{bmatrix},\quad
\mu_2=
\begin{bmatrix}
a_2+l_2\varphi_{1,1} &l_2\varphi_{1,1}\\
l_2\varphi_{2,1} &a_2+l_2\varphi_{2,1}
\end{bmatrix},\\
&\overline{\alpha}_2=
\begin{bmatrix}
\overline{\varphi}_{1,0}\\
\overline{\varphi}_{2,0}
\end{bmatrix},\quad
\overline{\beta}_2=
\begin{bmatrix}
\overline{\varphi}_{1,4}\\
\overline{\varphi}_{2,4}
\end{bmatrix},\quad
\nu_2=
\begin{bmatrix}
\overline{a}_2+l_2\varphi_{1,2} &l_2\varphi_{1,2}\\
l_2\varphi_{2,2} &\overline{a}_2+l_2\varphi_{2,2}
\end{bmatrix}.
\end{align*}
Then \eqref{10} can be rewritten as
\begin{equation*}
\begin{cases}
d\alpha_2=-\left(\mu_2\alpha_2+\nu_2\overline{\alpha}_2+c_2\beta+\overline{c}_2\overline{\beta}_2\right)+\beta_2 dB_t,
\quad t\in[\tau,T),\\
\alpha_2(T)=0,
\end{cases}
\end{equation*}
which is a linear 2-dimensional BSDE. Thus the result is obtained directly.
\end{proof}

Next, we aim to solve the optimal state $X^*_2$ at the second stage. Combining \eqref{1} and \eqref{3} derives the following MF-SDE:
\begin{equation}\label{15}
\begin{cases}
dX^*_2=\left[(a_2+l_2\varphi_1)X^*_2+(\overline{a}_2+l_2\varphi_2)\overline{X}^*_2+l_2\varphi_0\right]dt+\left[c_2X^*_2+\overline{c}_2\overline{X}^*_2\right]dB_t,\quad t\in[\tau,T),\\
X^*_2(\tau)=X_1(\tau).
\end{cases}
\end{equation}
Observing that \eqref{15} is linear, we set
\begin{equation}\label{17}
X^*_2(t)=M(t)X^*_2(\tau)+N(t).
\end{equation}
Substituting \eqref{17} into \eqref{15} yields
\begin{align*}
X^*_2(t)&=X^*_2(\tau)+X^*_2(\tau)\int_{\tau}^t(a_2(s)+l_2(s)\varphi_1(s))M(s)+(\overline{a}_2(s)+l_2(s)\varphi_2(s))\overline{M}(s)ds\\
&\quad\mbox{}+\int_{\tau}^t(a_2(s)+l_2(s)\varphi_1(s))N(s)+(\overline{a}_2(s)+l_2(s)\varphi_2(s))\overline{N}(s)+l_2(s)\varphi_0(s)ds\\
&\quad\mbox{}+X^*_2(\tau)\int_{\tau}^tc_2(s)M(s)+\overline{c}_2(s)\overline{M}(s)dB_s+\int_{\tau}^tc_2(s)N(s)+\overline{c}_2(s)\overline{N}(s)dB_s.
\end{align*}
Thus, $M(t)$ and $N(t)$ are solutions to the following  system of linear MF-SDEs:
\begin{equation}\label{18}
\begin{cases}
dM=\left[(a_2+l_2\varphi_1)M+(\overline{a}_2+l_2\varphi_2)\overline{M}\right]dt+\left[c_2M+\overline{c}_2\overline{M}\right]dB_t,\quad t\in(\tau,T],\\
dN=\left[(a_2+l_2\varphi_1)N+(\overline{a}_2+l_2\varphi_2)\overline{N}+l_2\varphi_0\right]dt+\left[c_2N+\overline{c}_2\overline{N}\right]dB_t,\quad t\in(\tau,T],\\
M(\tau)=1, \quad N(\tau)=0.
\end{cases}
\end{equation}

We also need the following lemma.
\begin{lemma}\label{40}
At $(v_{1,2},v_{2,2})=(v^*_{1,2},v^*_{2,2})$, the value of the objective subfunctions for followers is
\begin{equation}\label{19}
J_{i,2}(v^*_{1,2},v^*_{2,2})=\frac{1}{2}\left[\left((\varphi_{i,1}+\varphi_{i,2})X^{*2}_2
+\varphi_{i,0}X^*_2\right)(\tau)+\mathbb{E}\Big[\int_{\tau}^Tl_2(t)P_{v^*_{1,2}}(t)P_{v^*_{2,2}}(t)dt\Big|\mathfrak{F}_{\tau}\right].
\end{equation}
\end{lemma}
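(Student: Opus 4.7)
The plan is to apply Itô's product rule to $X^*_2(t)P_{v^*_{i,2}}(t)$ using the MF-FBSDE \eqref{1}, then integrate from $\tau$ to $T$ and take the conditional expectation $\mathbb{E}[\,\cdot\,|\mathfrak{F}_\tau]$. The motivation is that the product $X^*_2 P_{v^*_{i,2}}$ carries exactly the quadratic/running-cost information I need, because the driver of $P_{v^*_{i,2}}$ is linear in $X^*_2$ with coefficient $p_{i,2}$, while the driver of $X^*_2$ contains the term $q^{-1}_{1,2}b_{1,2}^2P_{v^*_{1,2}}+q^{-1}_{2,2}b_{2,2}^2P_{v^*_{2,2}}$, which by Assumption $(C)$ equals $l_2(P_{v^*_{1,2}}+P_{v^*_{2,2}})$ and hence matches the control-cost structure $q_{i,2}v^{*2}_{i,2}=l_2 P^2_{v^*_{i,2}}$ plus a genuine cross term $l_2 P_{v^*_{1,2}}P_{v^*_{2,2}}$.

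First I would expand $d(X^*_2 P_{v^*_{i,2}})=P_{v^*_{i,2}}\,dX^*_2+X^*_2\,dP_{v^*_{i,2}}+d\langle X^*_2,P_{v^*_{i,2}}\rangle$ and collect the drift. The symmetric $a_2 X^*_2 P_{v^*_{i,2}}$ contributions cancel automatically, and after conditioning on $\mathfrak{F}_\tau$ the mean-field cross terms $\bar a_2(P_{v^*_{i,2}}\bar X^*_2-X^*_2\bar P_{v^*_{i,2}})$ and $\bar c_2(Q_{v^*_{i,2}}\bar X^*_2-X^*_2\bar Q_{v^*_{i,2}})$ vanish, because for any $\mathfrak{F}_\tau$-measurable processes one has $\mathbb{E}[X^*_2\,\bar P_{v^*_{i,2}}|\mathfrak{F}_\tau]=\bar X^*_2\,\bar P_{v^*_{i,2}}$ and similarly the other way. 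I would also use $\mathbb{E}[X^*_2\bar X^*_2|\mathfrak{F}_\tau]=\bar X^{*2}_2$ to turn the $\bar p_{i,2}$ drift term into $\bar p_{i,2}\bar X^{*2}_2$, thereby reproducing the integrand of $J_{i,2}$ verbatim apart from the additional $l_2 P_{v^*_{1,2}}P_{v^*_{2,2}}$ cross term.

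The terminal condition $P_{v^*_{i,2}}(T)=-r_{i}(T)X^*_2(T)$ converts the boundary value at $T$ into $-r_i(T)\mathbb{E}[X^{*2}_2(T)|\mathfrak{F}_\tau]$, which together with the running integrals produces exactly $2J_{i,2}(v^*_{1,2},v^*_{2,2})$ on one side of the equation. To deal with the boundary value at $\tau$, I substitute the feedback form \eqref{3}: since $X^*_2(\tau)=X_1(\tau)$ is $\mathfrak{F}_\tau$-measurable we have $\bar X^*_2(\tau)=X^*_2(\tau)$, so $X^*_2(\tau)P_{v^*_{i,2}}(\tau)=(\varphi_{i,1}(\tau)+\varphi_{i,2}(\tau))X^{*2}_2(\tau)+\varphi_{i,0}(\tau)X^*_2(\tau)$, which is precisely the desired boundary piece in \eqref{19}.

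The main technical obstacle is justifying that the $dB_t$-integrals from Itô's formula have zero conditional expectation, since the integrands $X^*_2 Q_{v^*_{i,2}}$, $P_{v^*_{i,2}}(c_2X^*_2+\bar c_2\bar X^*_2)$ need not be a priori in $L^2$. I would handle this by a standard localization: stop at $\sigma_n=\inf\{t:|X^*_2(t)|+|P_{v^*_{i,2}}(t)|+\int_\tau^t|Q_{v^*_{i,2}}(s)|^2ds\ge n\}\wedge T$, apply the identity up to $\sigma_n$, and then pass to the limit using the $L^2$-estimates for MF-SDEs (from the bounded coefficients in Assumption $(A)$), the boundedness of $\varphi_{i,1},\varphi_{i,2}$ on $[\tau,T]$, and the $\mathcal{L}^2_\mathfrak{F}$-regularity of $(\varphi_{i,0},\varphi_{i,4})$ established in Lemma 3.2; dominated convergence then recovers \eqref{19}.
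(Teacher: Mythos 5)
Your proposal is correct and follows essentially the same route as the paper: apply It\^{o}'s formula to $X^*_2P_{v^*_{i,2}}$, take conditional expectation with respect to $\mathfrak{F}_{\tau}$, use the terminal condition at $T$ and the feedback representation \eqref{3} at $\tau$ (with $\overline{X}^*_2(\tau)=X^*_2(\tau)$), and identify $q^{-1}_{i,2}b_{i,2}^2P^2_{v^*_{i,2}}=q_{i,2}v^{*2}_{i,2}$ so that only the cross term $l_2P_{v^*_{1,2}}P_{v^*_{2,2}}$ survives. Your added care about the vanishing of the mean-field cross terms and the localization justifying that the stochastic integrals are true martingales only makes explicit steps the paper leaves implicit.
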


\begin{proof}
Applying It\^{o}'s formula to $X^*_2P_{v^*_{i,2}}$ and taking conditional expectation with respect to $\mathfrak{F}_{\tau}$, one has
\begin{align*}
&\quad\; \mathbb{E}\left[X^*_2(T)P_{v^*_{1,2}}(T)-X^*_2(\tau)P_{v^*_{i,2}}(\tau)\Big|\mathfrak{F}_{\tau}\right]\\
&= -\mathbb{E}\left[r_{i}(T)X^{*2}_2(T)\Big|\mathfrak{F}_{\tau}\right]-X^*_2(\tau)P_{v^*_{i,2}}(\tau)\\
&= \mathbb{E}\Big[\int_{\tau}^T
\left[q^{-1}_{1,2}b_{1,2}^2P_{v^*_{1,2}}P_{v^*_{i,2}}+q^{-1}_{2,2}b_{2,2}^2P_{v^*_{2,2}}P_{v^*_{i,2}}+P_{v^*_{1,2}}X^{*2}_2
+\overline{P}_{v^*_{1,2}}\overline{X}^{*}_2{X}^{*}_2\right](t)dt\Big|\mathfrak{F}_{\tau}\Big].
\end{align*}

For follower $\mathcal{P}_1$, it follows from the fact $q^{-1}_{1,2}b_{1,2}^2P_{v^*_{1,2}}P_{v^*_{1,2}}=q_{1,2}v^{*2}_{1,2}$ that
\begin{align*}
&\quad\;X^*_2(\tau)P_{v^*_{1,2}}(\tau)\\
&=\varphi_{1,1}(\tau)X^{*2}_2(\tau)+\varphi_{1,2}(\tau)\overline{X}^*_2(\tau)X^*_2(\tau)+\varphi_{1,0}(\tau)X^*_2(\tau)\\
&=(\varphi_{1,1}(\tau)+\varphi_{1,2}(\tau))X^{*2}_2(\tau)+\varphi_{1,0}(\tau)X^*_2(\tau)\\
&=-\mathbb{E}\Big[\int_{\tau}^T\left[q_{1,2}v^{*2}_{1,2}+q^{-1}_{2,2}b_{2,2}^2P_{v^*_{2,2}}P_{v^*_{1,2}}+P_{v^*_{1,2}}X^{*2}_2
+\overline{P}_{v^*_{1,2}}\overline{X}^{*2}_2\right](t)dt+r_{i}(T)X^{*2}_2(T)\Big|\mathfrak{F}_{\tau}\Big].
\end{align*}
This implies
$$
J_{1,2}(v^*_{1,2},v^*_{2,2})=\frac{1}{2}\left[(\varphi_{1,1}(\tau)+\varphi_{1,2}(\tau))X^{*2}_2(\tau)+\varphi_{1,0}(\tau)X^*_2(\tau)
+\mathbb{E}\Big[\int_{\tau}^Tl_2(t)P_{v^*_{2,2}}(t)P_{v^*_{1,2}}(t)dt\Big|\mathfrak{F}_{\tau}\right].
$$
Analogously, for follower $\mathcal{P}_2$, one has
$$
J_{2,2}(v^*_{1,2},v^*_{2,2})=\frac{1}{2}\left[(\varphi_{2,1}(\tau)+\varphi_{2,2}(\tau))X^{*2}_2(\tau)+\varphi_{2,0}(\tau)X^*_2(\tau)+
\mathbb{E}\Big[\int_{\tau}^Tl_2(t)P_{v^*_{1,2}}(t)P_{v^*_{2,2}}(t)dt\Big|\mathfrak{F}_{\tau}\right].
$$
This ends the proof.
\end{proof}

Now, we are able to give the formulation of the optimal objective subfunctions at the second stage. Combining \eqref{3}, \eqref{17} and Lemma \ref{40}, one has
\begin{align}\label{20}
\mathbb{E}&\left[\int_{\tau}^Tl_2(t)P_{v^*_{1,2}}(t)P_{v^*_{2,2}}(t)dt\Big|\mathfrak{F}_{\tau}\right]\nonumber\\
=\mathbb{E}&\Big[\int_{\tau}^Tl_2\left[\left(\varphi_{1,1}M+\varphi_{1,2}\overline{M}\right)X^*_2(\tau)+\left(\varphi_{1,1}N+\varphi_{1,2}\overline{N}+\varphi_{1,0}\right)\right]\nonumber\\
&\times\left[\left(\varphi_{2,1}M+\varphi_{2,2}\overline{M}\right)X^*_2(\tau)+\left(\varphi_{2,1}N+\varphi_{2,2}\overline{N}+\varphi_{2,0}\right)\right](t)dt\Big|\mathfrak{F}_{\tau}\Big]\nonumber\\
=\mathbb{E}&\Big[\int_{\tau}^Tl_2\left(\varphi_{1,1}M+\varphi_{1,2}\overline{M}\right)\left(\varphi_{2,1}M+\varphi_{2,2}\overline{M}\right)(t)dt\Big|\mathfrak{F}_{\tau}\Big]\left(X^{*}_2(\tau)\right)^2+\mathbb{E}\Big[\int_{\tau}^Tl_2\Big[\left(\varphi_{1,1}M+\varphi_{1,2}\overline{M}\right)\nonumber\\
&\left(\varphi_{2,1}N+\varphi_{2,2}\overline{N}+\varphi_{2,0}\right)+\left(\varphi_{1,1}N+\varphi_{1,2}\overline{N}+\varphi_{1,0}\right)\left(\varphi_{2,1}M+\varphi_{2,2}\overline{M}\right)\Big](t)\Big|\mathfrak{F}_{\tau}\Big]X^{*}_2(\tau)\nonumber\\
\mbox{}& + \mathbb{E}\Big[\int_{\tau}^Tl_2\left(\varphi_{2,1}N+\varphi_{2,2}\overline{N}+\varphi_{2,0}\right)\left(\varphi_{1,1}N+\varphi_{1,2}\overline{N}+\varphi_{1,0}\right)(t)dt\Big|\mathfrak{F}_{\tau}\Big].
\end{align}
Substituting \eqref{20} into \eqref{19}, we have the following theorem.

\begin{thm}\label{24}
For player $\mathcal{P}_i$, the optimal objective subfunction is
$$
J_{i,2}(v^*_{1,2},v^*_{2,2})=\frac{1}{2}\left[\kappa_{i,1}(\tau)X^{*2}_2(\tau)
+\kappa_{i,2}(\tau)X^*_2(\tau)+\kappa_{i,0}(\tau)\right],
$$
where
\begin{equation*}
\begin{cases}
\kappa_{i,1}(\tau)=\varphi_{i,1}(\tau)+\varphi_{i,2}(\tau)+\mathbb{E}\Big[\int_{\tau}^Tl_2\left(\varphi_{1,1}M+\varphi_{1,2}\overline{M}\right)\left(\varphi_{2,1}M+\varphi_{2,2}\overline{M}\right)(t)dt\Big|\mathfrak{F}_{\tau}\Big],\\
\kappa_{i,2}(\tau)=\varphi_{i,0}(\tau)+\mathbb{E}\Big[\int_{\tau}^Tl_2\Big[\left(\varphi_{1,1}M+\varphi_{1,2}\overline{M}\right)\left(\varphi_{2,1}N+\varphi_{2,2}\overline{N}+\varphi_{2,0}\right)\\
\qquad\quad\;\mbox{}+\left(\varphi_{1,1}N+\varphi_{1,2}\overline{N}+\varphi_{1,0}\right)\left(\varphi_{2,1}M+\varphi_{2,2}\overline{M}\right)\Big](t)\Big|\mathfrak{F}_{\tau}\Big],\\
\kappa_{i,0}(\tau)=\mathbb{E}\Big[\int_{\tau}^Tl_2\left(\varphi_{2,1}N+\varphi_{2,2}\overline{N}+\varphi_{2,0}\right)\left(\varphi_{1,1}N+\varphi_{1,2}\overline{N}+\varphi_{1,0}\right)(t)dt\Big|\mathfrak{F}_{\tau}\Big].
\end{cases}
\end{equation*}
\end{thm}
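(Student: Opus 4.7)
The starting point is Lemma \ref{40}, which already reduces the value of the optimal objective subfunction to
$$
J_{i,2}(v^*_{1,2},v^*_{2,2})=\tfrac{1}{2}\Bigl[(\varphi_{i,1}+\varphi_{i,2})(\tau)\,X^{*2}_2(\tau)+\varphi_{i,0}(\tau)X^*_2(\tau)+\mathbb{E}\Bigl[\int_\tau^T l_2(t)P_{v^*_{1,2}}(t)P_{v^*_{2,2}}(t)\,dt\,\Big|\,\mathfrak{F}_\tau\Bigr]\Bigr].
$$
So my plan is to reduce the remaining conditional expectation to a polynomial in $X^*_2(\tau)$ with $\mathfrak{F}_\tau$-measurable coefficients, by plugging in the feedback representation \eqref{3} together with the affine parametrization \eqref{17}.

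First, I would note that since $X^*_2(\tau)=X_1(\tau)$ is $\mathfrak{F}_\tau$-measurable and $M,N$ solve the MF-SDE system \eqref{18} on $[\tau,T]$, we have $\overline{X}^*_2(t)=\overline{M}(t)X^*_2(\tau)+\overline{N}(t)$ for $t\in[\tau,T]$, where $\overline{M}(t)=\mathbb{E}[M(t)|\mathfrak{F}_\tau]$ and $\overline{N}(t)=\mathbb{E}[N(t)|\mathfrak{F}_\tau]$. Substituting this along with \eqref{17} into \eqref{3} gives the decomposition
$$
P_{v^*_{i,2}}(t)=A_i(t)X^*_2(\tau)+B_i(t),\qquad A_i:=\varphi_{i,1}M+\varphi_{i,2}\overline{M},\quad B_i:=\varphi_{i,1}N+\varphi_{i,2}\overline{N}+\varphi_{i,0}.
$$

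Next, I would multiply $P_{v^*_{1,2}}P_{v^*_{2,2}}$ and group by powers of $X^*_2(\tau)$:
$$
l_2 P_{v^*_{1,2}}P_{v^*_{2,2}}=l_2 A_1 A_2\,[X^*_2(\tau)]^2+l_2(A_1 B_2+B_1 A_2)\,X^*_2(\tau)+l_2 B_1 B_2.
$$
Since $X^*_2(\tau)$ is $\mathfrak{F}_\tau$-measurable, I can pull it out of $\mathbb{E}[\,\cdot\,|\mathfrak{F}_\tau]$ term by term, and the remaining conditional expectations are exactly the integrals appearing in the definitions of $\kappa_{i,1},\kappa_{i,2},\kappa_{i,0}$. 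Adding the contribution from the first two terms produced by Lemma \ref{40} (which only affect the $[X^*_2(\tau)]^2$ and $X^*_2(\tau)$ coefficients via $\varphi_{i,1}(\tau)+\varphi_{i,2}(\tau)$ and $\varphi_{i,0}(\tau)$, respectively) gives precisely $\kappa_{i,1}(\tau),\kappa_{i,2}(\tau),\kappa_{i,0}(\tau)$, proving the claim.

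The argument is essentially a bookkeeping exercise, so the only real point requiring care is the $\mathfrak{F}_\tau$-measurability used in two places: (i) to rewrite $\overline{X}^*_2(t)$ as an affine function of $X^*_2(\tau)$ with $\overline{M},\overline{N}$ as coefficients, and (ii) to take $X^*_2(\tau)$ (and also $\varphi_{i,0}(\tau)$, which depends on the $\mathfrak{F}$-adapted process $\varphi_{i,0}$) out of the conditional expectation. No extra integrability issue arises beyond what Assumption $(A)$–$(B)$ and the existence results established in the preceding lemmas already guarantee, so the proof will be short and purely algebraic once the decomposition above is in place.
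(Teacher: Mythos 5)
Your proposal is correct and follows essentially the same route as the paper: the paper likewise substitutes \eqref{3} and \eqref{17} into the conditional expectation from Lemma \ref{40}, writes $P_{v^*_{i,2}}(t)=\left(\varphi_{i,1}M+\varphi_{i,2}\overline{M}\right)X^*_2(\tau)+\left(\varphi_{i,1}N+\varphi_{i,2}\overline{N}+\varphi_{i,0}\right)$, expands the product, and collects coefficients of the powers of the $\mathfrak{F}_\tau$-measurable quantity $X^*_2(\tau)$ (this is exactly the computation in \eqref{20}). No substantive difference from the paper's argument.
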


\begin{remark}\label{5}
By Theorem 3.8 Chapter IX in \cite{Revuz1999Continuous}, it is easy to show that $\varphi_{i,1}(t)<0$, $\varphi_{i,2}(t)<0$ and $M(t)>0$. Therefore, $\kappa_{i,1}(\tau)<0$ a.s. and the concavity of $J_{i,2}(v^*_{1,2},v^*_{2,2})$ with respect to $X^*_2(\tau)$ follows.
\end{remark}

\subsection{Nash equilibrium for followers at the first stage}
In this subsection, we solve the Nash equilibrium point $(v^*_{1,1},v^*_{2,1})$ for followers at the first stage. According to Theorem \ref{24}, Subproblem \eqref{23} can be rewritten as follows:
\begin{subprob}\label{42}
Find $(v^*_{1,1},v^*_{2,1})\in \mathcal{V}[0,\tau]\times \mathcal{V}[0,\tau]$ such that
\begin{equation*}
\begin{cases}
J_{1,1}(v^*_{1,1},v^*_{2,1},v_0)
=\mathop{\esssup}  \limits_{v_{1,1} \in \mathcal{V}[0,\tau]}\mathbb{E}\Big\{\int_0^{\tau}-\frac{1}{2}\left[p_{1,1}(t)X_1^2(t)+\overline{p}_{1,1}(t)\overline{X}_2^2(t)+q_{1,1}(t)v_{1,1}^2(t)\right]dt\\
\qquad\qquad\qquad\qquad\qquad\qquad\qquad\;\;\mbox{}+\frac{1}{2}\left[\kappa_{1,1}(\tau)X^{*2}_1(\tau)
+\kappa_{1,2}(\tau)X^*_1(\tau)+\kappa_{1,0}(\tau)\right]\Big|\mathfrak{F}_{0} \Big\},\\
J_{2,1}(v^*_{1,1},v^*_{2,1},v_0)
=\mathop{\esssup}  \limits_{v_{2,1} \in \mathcal{V}[0,\tau]}\mathbb{E}\Big\{\int_0^{\tau}-\frac{1}{2}\left[p_{2,1}(t)X_1^2(t)+\overline{p}_{2,1}(t)\overline{X}_2^2(t)+q_{2,1}(t)v_{2,1}^2(t)\right]dt\\
\qquad\qquad\qquad\qquad\qquad\qquad\qquad\;\;\mbox{}+\frac{1}{2}\left[\kappa_{2,1}(\tau)X^{*2}_1(\tau)
+\kappa_{2,2}(\tau)X^*_1(\tau)+\kappa_{2,0}(\tau)\right]\Big|\mathfrak{F}_{0} \Big\}.
\end{cases}
\end{equation*}
\end{subprob}

Similar to Subsection 3.1, we can define the Hamiltonian function of follower $\mathcal{P}_i$ at first stage by
\begin{align*}
H_{i,1}&:[0,\tau]\times \mathbb{R}\times \mathbb{R}\times\mathbb{R}\times \mathbb{R}\times \mathbb{R}\times \mathbb{R}\times \mathbb{R}\rightarrow \mathbb{R},\\
H_{i,1}&=H_{i,1}(t,X_1,\overline{X}_1,v_0,v_{1,1},v_{2,1},P_{v_{i,1}},Q_{v_{i,1}}),\\
&=-\frac{1}{2}\left[p_{i,1}X_1^2+\overline{p}_{i,1}\overline{X}_1^2+q_{i,1}v_{i,1}^2\right]+\left[a_1X_1+\overline{a}_1\overline{X}_1+b_{1,1}v_{1,1}+b_{2,1}v_{2,1}+b_0v_0\right]P_{v_{i,1}}\\
&\quad\,\mbox{}+\left[c_1X_1+\overline{c}_1\overline{X}_1\right]Q_{v_{i,1}}.
\end{align*}
Recalling Remark \ref{5} and using the sufficient and necessary maximum principles of MF-SDEs with default (see \cite{gou2020optimal}), we have
$$
0=\frac{\partial H_{i,1}}{\partial v_{i,1}}\Big|_{v_{i,1}=v^*_{i,1}}=-q_{i,1}v^*_{i,1}+b_{i,1}P_{v^*_{i,1}}\quad \Rightarrow \quad v^*_{i,1}=q^{-1}_{i,1}b_{i,1}P_{v^*_{i,1}},
$$
where $P_{v^*_{i,1}}$ is the solution to corresponding linear BSDE.

Now we summarize the above arguments as the following theorem.

\begin{thm}\label{13}
$(v^*_{1,1},v^*_{2,1})$ is a Nash equilibrium point for Subproblem \ref{42} if and only if
\begin{equation}\label{81}
(v^*_{1,1},v^*_{2,1})=(q^{-1}_{1,1}b_{i,1}P_{v^*_{1,1}},q^{-1}_{2,1}b_{2,1}P_{v^*_{2,1}}),
\end{equation}
where the optimal triple $(X_1^*,P_{v^*_{i,1}},Q_{v^*_{i,1}})$ enjoys the following MF-FBSDE:
\begin{equation}\label{82}
\begin{cases}
dX^*_1=\left[a_1X^*_1+\overline{a}_1\overline{X}^*_1+q^{-1}_{1,1}b_{1,1}^2P_{v^*_{1,1}}+q^{-1}_{2,1}b_{2,1}^2P_{v^*_{2,1}}+b_0v_0\right]dt+\left[c_1X^*_1+\overline{c}_1\overline{X}^*_1\right]dB_t,\quad t\in(0,\tau],\\
dP_{v^*_{i,1}}=\left[p_{i,1}X^*_1+\overline{p}_{i,1}\overline{X}^*_1-a_1P_{v^*_{i,1}}-\overline{a}_1\overline{P}_{v^*_{i,1}}-c_1Q_{v^*_{i,1}}-\overline{c}_1\overline{Q}_{v^*_{i,1}}\right]dt+Q_{v^*_{i,1}}dB_t, \quad t\in[0,\tau),\\
X_1^*(\tau)=x_0,\quad P_{v^*_{i,1}}(\tau)=\kappa_{i,1}(\tau)X^*_2(\tau)+\kappa_{i,2}(\tau).
\end{cases}
\end{equation}
\end{thm}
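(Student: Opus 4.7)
The plan is to mirror the argument used for Theorem \ref{7}, applied now at the first stage where the terminal cost is augmented by the value subfunctions obtained in Theorem \ref{24}. First, I would fix $v_0\in\mathcal{V}[0,\tau]$ and observe that the Nash equilibrium condition in Subproblem \ref{42} decouples into two single-agent optimal control problems: for each $i\in\{1,2\}$, $v^*_{i,1}$ maximizes $J_{i,1}(\cdot,v^*_{j,1},v_0)$ (with $j\neq i$) over $\mathcal{V}[0,\tau]$, subject to the controlled state $X_1$ driven by the first-stage dynamics and with terminal payoff $\tfrac12[\kappa_{i,1}(\tau)X_1^2(\tau)+\kappa_{i,2}(\tau)X_1(\tau)+\kappa_{i,0}(\tau)]$. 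Each of these is an MF-SDE control problem with default in the sense of \cite{gou2020optimal}, so the sufficient and necessary Pontryagin-type maximum principle applies.

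For the necessary direction I would introduce the Hamiltonian $H_{i,1}$ displayed in the text and invoke the necessary maximum principle. Since the control $v_{i,1}$ enters the drift linearly and the running cost quadratically, the stationarity relation $\partial_{v_{i,1}}H_{i,1}|_{v^*_{i,1}}=0$ gives the feedback $v^*_{i,1}=q^{-1}_{i,1}b_{i,1}P_{v^*_{i,1}}$. The adjoint $(P_{v^*_{i,1}},Q_{v^*_{i,1}})$ satisfies the linear mean-field BSDE on $[0,\tau)$ indicated in \eqref{82}, whose terminal value at $\tau$ is the derivative of the terminal payoff in $X_1$, namely $\kappa_{i,1}(\tau)X^*_2(\tau)+\kappa_{i,2}(\tau)$ (using $X^*_1(\tau)=X^*_2(\tau)$ from the continuity of the state at $\tau$, together with the sign convention inherited from \eqref{1}). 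Substituting the feedback into the state equation gives the forward part of \eqref{82}, so the optimal triple solves the stated MF-FBSDE.

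For the sufficient direction I would verify the concavity hypotheses required by the sufficient maximum principle of \cite{gou2020optimal}. The running Hamiltonian is concave jointly in $(X_1,\overline{X}_1,v_{i,1})$ because $p_{i,1},\overline{p}_{i,1},q_{i,1}>0$ under Assumption $(B)$, while the terminal payoff is concave in $X_1(\tau)$ precisely because $\kappa_{i,1}(\tau)<0$ almost surely, which is the content of Remark \ref{5}. Given these concavities, the standard variational comparison argument shows that any admissible perturbation of $v^*_{i,1}$ can only decrease $J_{i,1}(\cdot,v^*_{j,1},v_0)$, so $(v^*_{1,1},v^*_{2,1})$ is indeed a Nash equilibrium, establishing the ``if'' direction.

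The principal subtlety, in contrast to Theorem \ref{7}, is that the adjoint terminal condition $P_{v^*_{i,1}}(\tau)=\kappa_{i,1}(\tau)X^*_2(\tau)+\kappa_{i,2}(\tau)$ is genuinely random and $\mathfrak{F}_\tau$-measurable, with the random coefficients $\kappa_{i,k}(\tau)$ arising from the second-stage value subfunctions and themselves given by conditional expectations of integrals involving the MF-BSDE solutions $\varphi_{i,0},\varphi_{i,4}$. The main point to check is square integrability of this terminal datum, which follows from boundedness of $\varphi_{i,1},\varphi_{i,2}$ and $M,N$ on $[0,T]$, the $\mathcal{L}^2$ estimates for $\varphi_{i,0}$ from Lemma 3.2, and the standard $\mathcal{L}^2$ estimate for the first-stage state $X^*_1$; once this is in place, the linear mean-field BSDE on $[0,\tau)$ in \eqref{82} admits a unique $\mathfrak{F}$-adapted square-integrable solution by hypothesis $(\mathcal{H})$ and the progressive enlargement framework, and the argument above goes through verbatim.
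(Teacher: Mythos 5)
Your proposal is correct and follows essentially the same route as the paper: define the first-stage Hamiltonian $H_{i,1}$, invoke the sufficient and necessary maximum principle of \cite{gou2020optimal} together with the concavity of the terminal payoff guaranteed by $\kappa_{i,1}(\tau)<0$ (Remark \ref{5}), and read off the feedback $v^*_{i,1}=q^{-1}_{i,1}b_{i,1}P_{v^*_{i,1}}$ and the adjoint MF-FBSDE \eqref{82}. Your additional check of the square integrability of the random terminal datum $\kappa_{i,1}(\tau)X^*_2(\tau)+\kappa_{i,2}(\tau)$ is a welcome piece of rigor that the paper passes over in silence, but it does not change the argument.
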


Next, we put
\begin{equation}\label{27}
P_{v^*_{i,1}}(t)=\psi_{i,1}(t)X^*_1(t)+\psi_{i,2}(t)\overline{X}^*_1(t)+\psi_{i,0}(t),
\end{equation}
where $\psi_{i,1}$, $\psi_{i,2}$ are deterministic functions, $\psi_{i,0}$ is an $\mathfrak{F}$-adapted process such that
$$
d\psi_{i,0}(t)=\psi_{i,3}(t)dt+\psi_{i,4}(t)dB_t,\; \psi_0(T)=0.
$$
Moreover, $\psi_{j}=\psi_{1,j}+\psi_{2,j}$.

Then, repeating the arguments in Subsection 3.1, we can obtain the following system of ODEs:
\begin{equation}\label{28}
\begin{cases}
\psi'_{1,1}+(2a_1+c^1_2)\psi_{1,1}+l_1\psi^2_{1,1}+l_1\psi_{1,1}\psi_{2,1}-p_{1,1}=0, \quad t\in[0,\tau),\\
\psi'_{2,1}+(2a_1+c^1_2)\psi_{2,1}+l_1\psi_{1,1}\psi_{2,1}+l_1\psi^2_{2,1}-p_{2,1}=0, \quad t\in[0,\tau),\\
\psi'_{1,2}+\left(2a_1+2\overline{a}_1+2l_1\psi_{1,1}+l_1\psi_{2,1}\right)\psi_{1,2}+l_1\psi^2_{1,2}+l_1\psi_{1,2}\psi_{2,2}\\
\quad\;\;\,\mbox{}+l_1\psi_{1,1}\psi_{2,2}+\psi_{1,1}\Big[2\overline{a}_1+\overline{c}_1^2+2c_1\overline{c}_1\Big]-\overline{p}_{1,2}=0,
\quad t\in[0,\tau),\\
\psi'_{2,2}+\left(2a_1+2\overline{a}_1+2l_1\psi_{2,1}+l_1\psi_{1,1}\right)\psi_{2,2}+l_1\psi^2_{2,2}+l_1\psi_{1,2}\psi_{2,2}\\
\quad\;\;\,\mbox{}+l_1\psi_{2,1}\psi_{1,2}+\psi_{2,1}\Big[2\overline{a}_1+\overline{c}_1^2+2c_1\overline{c}_1\Big]-\overline{p}_{2,2}=0,
\quad t\in[0,\tau),\\
\psi_{1,1}(\tau)=\kappa_{1,1}(\tau),\quad \psi_{1,2}(\tau)=0,\quad \psi_{2,1}(\tau)=\kappa_{2,1}(\tau),\quad \psi_{2,2}(\tau)=0
\end{cases}
\end{equation}
for $(\psi_{1,1},\psi_{1,2},\psi_{2,1},\psi_{2,2})$, and the following system of MF-BSDEs:
\begin{equation}\label{29}
\begin{cases}
d\psi_{1,0}=-\Big[\psi_{1,1}b_0v_0+\psi_{1,2}b_0\overline{v}_0+\left(a_1+l_1\psi_{1,1}\right)\psi_{1,0}+\left(\overline{a}_1+l_1\psi_{1,2}\right)\overline{\psi}_{1,0}+c_1\psi_{1,4}+\overline{c}_1\overline{\psi}_{1,4}\\
\qquad\quad\,\,\mbox{}+l_1\left(\psi_{1,1}\psi_{2,0}+\psi_{1,2}\overline{\psi}_{2,0}\right)\Big]dt+\psi_{1,4}dB_t,
\quad t\in[0,\tau),\\
d\psi_{2,0}=-\Big[\psi_{2,1}b_0v_0+\psi_{2,2}b_0\overline{v}_0+\left(a_1+l_1\psi_{2,1}\right)\psi_{2,0}+\left(\overline{a}_1+l_1\psi_{2,2}\right)\overline{\psi}_{2,0}+c_1\psi_{2,4}+\overline{c}_1\overline{\psi}_{2,4}\\
\qquad\quad\,\,\mbox{}+l_1\left(\psi_{2,1}\psi_{1,0}+\psi_{2,2}\overline{\psi}_{1,0}\right)\Big]dt+\psi_{2,4}dB_t,
\quad t\in[0,\tau),\\
\psi_{1,0}(\tau)=\kappa_{1,2}(\tau),\quad \psi_{2,0}(\tau)=\kappa_{2,2}(\tau)
\end{cases}
\end{equation}
for $((\psi_{1,0},\psi_{1,4}),(\psi_{2,0},\psi_{2,4}))$. Analogously, we know that both \eqref{28} and \eqref{29} have unique solutions.
By setting
\begin{align*}
&\alpha_1=
\begin{bmatrix}
\psi_{1,0}\\
\psi_{2,0}
\end{bmatrix},\quad
&&\beta_1=
\begin{bmatrix}
\psi_{1,4}\\
\psi_{2,4}
\end{bmatrix},\quad
&&\mu_1=
\begin{bmatrix}
a_1+l_1\psi_{1,1} &l_1\psi_{1,1}\\
l_1\psi_{2,1} &a_1+l_1\psi_{2,1}
\end{bmatrix},\\
&\overline{\alpha}_1=
\begin{bmatrix}
\overline{\psi}_{1,0}\\
\overline{\psi}_{2,0}
\end{bmatrix},\quad
&&\overline{\beta}_1=
\begin{bmatrix}
\overline{\psi}_{1,4}\\
\overline{\psi}_{2,4}
\end{bmatrix},\quad
&&\nu_1=
\begin{bmatrix}
\overline{a}_1+l_1\psi_{1,2} &l_1\psi_{1,2}\\
l_1\psi_{2,2} &\overline{a}_1+l_1\psi_{2,2}
\end{bmatrix},\\
&\delta_1=
\begin{bmatrix}
\psi_{1,1}b_0\\
\psi_{2,1}b_0
\end{bmatrix},\quad
&&\overline{\delta}_1=
\begin{bmatrix}
\psi_{1,2}b_0\\
\psi_{2,2}b_0
\end{bmatrix},\quad
&&\iota(\tau)=\begin{bmatrix}
\kappa_{1,2}(\tau)\\
\kappa_{2,2}(\tau)
\end{bmatrix},
\end{align*}
\eqref{29} can be rewritten as
\begin{equation}\label{4}
\begin{cases}
d\alpha_1=-\left(\mu_1\alpha_1+\nu_1\overline{\alpha}_1+c_1\beta_1+\overline{c}_1\overline{\beta}_1+\delta_1v_0+\overline{\delta}_1\overline{v}_0\right)+\beta_1 dB_t,
\quad t\in[0,\tau),\\
\alpha_1(\tau)=\iota(\tau).
\end{cases}
\end{equation}
Thus, at the first stage, the optimal state $X^*_1(t)$ at Nash equilibrium point $(v^*_{1,1},v^*_{2,1})$ is uniquely determined by the following system:
\begin{equation}\label{36}
\begin{cases}
dX^*_1=\left[(a_1+l_1\psi_1)X^*_1+(\overline{a}_1+l_1\psi_2)\overline{X}^*_1+(l_1,l_1)\alpha_1+b_0v_{0}\right]dt\\
\qquad\quad\,\mbox{}+\left[c_1X^*_1+\overline{c}_1\overline{X}^*_1\right]dB_t, \quad t\in(0,\tau],\\
d\alpha_1=-\left(\mu_1\alpha_1+\nu_1\overline{\alpha}_1+c_1\beta_1+\overline{c}_1\overline{\beta}_1+\delta_1v_0+\overline{\delta}_1\overline{v}_0\right)+\beta_1 dB_t,
\quad t\in[0,\tau),\\
X^*_1(0)=x_0,\quad \alpha_1(\tau)=\iota(\tau).
\end{cases}
\end{equation}

\subsection{Stackelberg solution of LQ-MF-SSDG with random exit time}
In this subsection, we focus on finding the optimal strategy $v^*_0$ for the leader and then deriving optimal strategies of followers by setting $v_{0}=v^*_{0}$ in \eqref{82}. Combining \eqref{31}, \eqref{37}, \eqref{38} and \eqref{36}, we obtain the following optimal strategy problem for the leader, which is indeed an optimal control problem for MF-SDEs with default.
\begin{subprob}\label{88}
Find $v^*_{0} \in \mathcal{V}[0,\tau]$ such that
\begin{align*}
&\quad\;J_0(v^*_1,v^*_2,v^*_0)=\mathop{\esssup}\limits_{v_{0} \in \mathcal{V}[0,\tau]}J_0(v^*_1,v^*_2,v_0)\\
&=\mathop{\esssup}\limits_{v_{0}\in \mathcal{V}[0,\tau]}\left\{-\frac{1}{2}\mathbb{E}\left[\int_0^{\tau}p_0(t)Y^2(t)+\overline{p}_0(t)\overline{Y}^2(t)
+q_0(t)v_0^2(t)dt+r_0(\tau)Y^2(\tau)\Big|\mathfrak{F}_{0}\right]\right\},
\end{align*}
where the state process $Y(t)$ is governed by
\begin{equation*}
\begin{cases}
dY=\left[(a_1+l_1\psi_1+\gamma^{\mathfrak{F}}d_0)Y+(\overline{a}_1+l_1\psi_2+\gamma^{\mathfrak{F}}\overline{d}_0)\overline{Y}+(l_1,l_1)\alpha_1+b_0v_{0}\right]dt\\
\qquad\;\,\:\mbox{}+\left[c_1Y+\overline{c}_1\overline{Y}\right]dB_t+\left[d_0Y+\overline{d}_0\overline{Y}\right]dA_t, \quad t\in(0,\tau],\\
d\alpha_1=-\left(\mu_1\alpha_1+\nu_1\overline{\alpha}_1+c_1\beta_1+\overline{c}_1\overline{\beta}_1+\delta_1v_0+\overline{\delta}_1\overline{v}_0\right)dt+\beta_1 dB_t,
\quad t\in[0,\tau);\\
Y(0)=x_0,\quad \alpha_1(\tau)=\iota(\tau).
\end{cases}
\end{equation*}
\end{subprob}
To solve Subproblem \ref{88}, we define the Hamiltonian function of the leader by setting
\begin{align*}
H_{0}&:[0,\tau]\times \mathbb{R}\times \mathbb{R}\times\mathbb{R}\times \mathbb{R}^2\times \mathbb{R}\times\mathbb{R}\times \mathbb{R}\times \mathbb{R}^2\rightarrow \mathbb{R},\\
H_{0}&=H_{0}(t,Y,\overline{Y},v_0,\alpha_1,P_{v_{0}},Q_{v_{0}},R_{v_{0}},Z_{v_{0}})\\
&=-\frac{1}{2}\left[p_{0}Y^2+\overline{p}_{0}\overline{Y}^2+q_{0}v_{0}^2\right]+\left\langle P_{v_{0}},(a_1+l_1\psi_1+\gamma^{\mathfrak{F}}d_0)Y+(\overline{a}_1+l_1\psi_2+\gamma^{\mathfrak{F}}\overline{d}_0)\overline{Y}+(l_1,l_1)\alpha_1+b_0v_{0}\right\rangle\\
&\quad\,\mbox{}+\left\langle Q_{v_{0}},c_1Y+\overline{c}_1\overline{Y}\right\rangle+\left\langle R_{v_{0}},(d_0Y+\overline{d}_0\overline{Y})\gamma^{\mathcal{F}}\right\rangle
+\left\langle Z_{v_{0}},\mu_1\alpha_1+\nu_1\overline{\alpha}_1+c_1\beta_1+\overline{c}_1\overline{\beta}_1+\delta_1v_0+\overline{\delta}_1\overline{v}_0\right\rangle,
\end{align*}
where $(P_{v_{0}},Q_{v_{0}},R_{v_{0}},Z_{v_{0}})$ is the  solution to the following MF-FBSDE:
\begin{equation*}
\begin{cases}
dP_{v_{0}}=\Big\{p_{0}Y+\overline{p}_{0}\overline{Y}-(a_1+l_1\psi_1+\gamma^{\mathfrak{F}}d_0)P_{v_{0}}-(\overline{a}_1+l_1\psi_2)\overline{P}_{v_{0}}-\overline{d}_0\mathbb{E}\left[\gamma^{\mathfrak{F}}P_{v_{0}}\Big|\mathfrak{F}_{0}\right]\\
\qquad\;\,\:\mbox{}-(c_1Q_{v_{0}}+\overline{c}_1\overline{Q}_{v_{0}})-(R_{v_{0}}\gamma^{\mathfrak{F}}d_0+\overline{d}_0\mathbb{E}\left[\gamma^{\mathfrak{F}}R_{v_{0}}\Big|\mathfrak{F}_{0}\right])\Big\}dt+Q_{v_{0}}dB_t+R_{v_{0}}dA_t, \quad t\in(0,\tau],\\
dZ_{v_{0}}=\left(
\begin{bmatrix}
l_1\\
l_1
\end{bmatrix}
P_{v_{0}}+\mu_1^TZ_{v_{0}}+\nu_1^T\overline{Z}_{v_{0}}\right)dt+\left(c_1Z_{v_{0}}+\overline{c}_1\overline{Z}_{v_{0}}\right) dB_t,
\quad t\in[0,\tau),\\
P_{v_{0}}(\tau)=-r_0(\tau)Y(\tau),\quad Z_{v_{0}}(0)=
\begin{bmatrix}
0\\
0
\end{bmatrix}.
\end{cases}
\end{equation*}
Here $\mu_1^T$ and $\nu_1^T$ represent the transposed matrixes of $\mu_1$ and $\nu_1$, respectively. It is easy to show that
$v_0$ becomes an optimal control of Subproblem \ref{42} when
$$
0=\frac{\partial H_{0}}{\partial v_{0}}+\mathbb{E}\left[\nabla_{\overline{v}_0}H_0\Big|\mathfrak{F}_{0}\right]\Big|_{v_{0}=v^*_{0}}
=-q_{0}v^*_{0}+b_0P_{v^*_{0}}+\langle\delta_1,Z_{v^*_{0}}\rangle+\langle\overline{\delta}_1,\overline{Z}_{v^*_{0}}\rangle,
$$
where $\nabla_{\overline{v}_0}$ is the Fr\'{e}chet derivative with respect to $\overline{v}_0$. It remains to verify that, for
\begin{equation}\label{50}
v_0^*=q_0^{-1}(b_0P_{v^*_{0}}+\langle\delta_1,Z_{v^*_{0}}\rangle+\langle\overline{\delta}_1,\overline{Z}_{v^*_{0}}\rangle),
\end{equation}
the following inequality holds:
$$
J_0(v^*_1,v^*_2,v^*_0)\leq J_0(v^*_1,v^*_2,v_0), \quad \forall v_0\in\mathcal{V}[0,\tau].
$$
Setting $\widetilde{Y}=Y^*-Y$, $\widetilde{\alpha}_1=\alpha_1^*-\alpha_1$ and $\widetilde{v}_0=v_0^*-v_0$, one has
\begin{align}\label{51}
&\quad\;J_0(v^*_1,v^*_2,v^*_0)-J_0(v^*_1,v^*_2,v_0)\nonumber\\
&=-\frac{1}{2}\mathbb{E}\left[\int_0^{\tau}p_0(t)\widetilde{Y}^{*2}(t)+\overline{p}_0(t)\widetilde{\overline{Y}}^{*2}(t)+q_0(t)\widetilde{v}_0^{*2}(t)dt+r_0(\tau)\widetilde{Y}^{2}(\tau)\Big|\mathfrak{F}_{0}\right]\nonumber\\
&\quad\,\mbox{}-\mathbb{E}\left[\int_0^{\tau}p_0(t)Y^*(t)\widetilde{Y}(t)+\overline{p}_0(t)\overline{Y}^*(t)\widetilde{\overline{Y}}(t)+v^*_0(t)\widetilde{v}_0(t)dt+r_0(\tau)Y^*(\tau)\widetilde{Y}(\tau)\Big|\mathfrak{F}_{0}\right]\nonumber\\
&\leq -\mathbb{E}\left[\int_0^{\tau}p_0(t)Y^*(t)\widetilde{Y}(t)+\overline{p}_0(t)\overline{Y}^*(t)\widetilde{\overline{Y}}(t)+q_0(t)v^*_0(t)\widetilde{v}_0(t)dt-\widetilde{Y}(\tau)P_{v^*_{0}}(\tau)\Big|\mathfrak{F}_{0}\right].
\end{align}
Noticing that $\widetilde{Y}(0)=\langle \widetilde{\alpha}_1(0),Z_{v^*_{0}}(0)\rangle=\langle \widetilde{\alpha}_1(T),Z_{v^*_{0}}(T)\rangle$=0, we have
\begin{align}\label{52}
&\quad\;\mathbb{E}\left[\widetilde{Y}(\tau)P_{v^*_{0}}(\tau)\Big|\mathfrak{F}_{0}\right]\nonumber\\
&=\mathbb{E}\left[\left(\widetilde{Y}(\tau)P_{v^*_{0}}(\tau)-\widetilde{Y}(0)P_{v^*_{0}}(0)\right)-\left(\langle \widetilde{\alpha}_1(T),Z_{v^*_{0}}(T)\rangle-\langle \widetilde{\alpha}_1(0),Z_{v^*_{0}}(0)\rangle\right)\Big|\mathfrak{F}_{0}\right]\nonumber\\
&=\mathbb{E}\Big[\int_0^{\tau}\widetilde{Y}(t)dP_{v^*_{0}}(t)+P_{v^*_{0}}(t)d\widetilde{Y}(t)+\langle d\widetilde{Y}(t),dP_{v^*_{0}}(t)\rangle-\langle \widetilde{\alpha}_1(t),d Z_{v^*_{0}}(t)\rangle\nonumber\\
&\quad\;\mbox{}
-\langle Z_{v^*_{0}}(t),d\widetilde{\alpha}_1(t)\rangle-\langle d\widetilde{\alpha}_1(t),dZ_{v^*_{0}}(t)\rangle\Big|\mathfrak{F}_{0}\Big]\nonumber\\
&=\mathbb{E}\Big[\int_0^{\tau}\left[p_{0}(t)\widetilde{Y}(t)+\overline{p}_{0}(t)\widetilde{\overline{Y}}(t)\right]Y^*(t)-\left[(l_1,l_1)\widetilde{\alpha}_1+b_0\widetilde{v}_{0}\right]P_{v^*_{0}}(t)+\langle\delta_1(t),Z_{v^*_{0}}(t)\rangle \widetilde{v}_0(t)\nonumber\\
&\quad\;\mbox{}+\left\langle\overline{\delta}_1(t),\overline{Z}_{v^*_{0}}(t)\right\rangle \widetilde{v}_0(t)+
\langle \widetilde{\alpha}_1(t),
\begin{bmatrix}
l_1\\
l_1
\end{bmatrix}
P_{v^*_{0}}(t)\rangle dt\Big|\mathfrak{F}_{0}\Big].
\end{align}
Substituting \eqref{50} and \eqref{52} into \eqref{51} obtains
\begin{align*}
&\quad\;J_0(v^*_1,v^*_2,v^*_0)-J_0(v^*_1,v^*_2,v_0)\\
&\leq \mathbb{E}\Big[\int_0^{\tau}q_0(t)v^*_0(t)\widetilde{v}_0(t)+\left[(l_1,l_1)(t)\widetilde{\alpha}_1(t)-b_0(t)\widetilde{v}_{0}\right]P_{v^*_{0}}(t)+\langle\delta_1(t),Z_{v^*_{0}}(t)\rangle \widetilde{v}_0(t)\\
&\quad\;\mbox{}-\langle\overline{\delta}_1(t),\overline{Z}_{v^*_{0}}(t)\rangle \widetilde{v}_0(t)-\langle \widetilde{\alpha}_1(t),
\begin{bmatrix}
l_1\\
l_1
\end{bmatrix}
(t)P_{v^*_{0}}(t)\rangle dt\Big|\mathfrak{F}_{0}\Big]\\
&=\mathbb{E}\Big[\int_0^{\tau}\left(q_0(t)v^*_0(t)-b_0(t)P_{v^*_{0}}(t)-\langle\delta_1(t),Z_{v^*_{0}}(t)\rangle-\langle\overline{\delta}_1(t),\overline{Z}_{v^*_{0}}(t)\rangle\right)\widetilde{v}_0(t)dt\Big|\mathfrak{F}_{0}\Big]\\
&=0,
\end{align*}
which is the desired result.

According to the above arguments, we have the following theorem.

\begin{thm}\label{53}
The optimal strategy of the leader for Subproblem \ref{88} is
\begin{equation}\label{6}
v_0^*=q_0^{-1}(b_0P_{v^*_{0}}+\langle\delta_1,Z_{v^*_{0}}\rangle+\langle\overline{\delta}_1,\overline{Z}_{v^*_{0}}\rangle).
\end{equation}
Here $(Y^*,\alpha^*_1,\beta^*,P_{v^*_{0}},Q_{v^*_{0}},R_{v^*_{0}},Z_{v^*_{0}})$ is a solution to the following linear MF-FBSDE:
\begin{equation}\label{83}
\begin{cases}
dY^*=\left[(a_1+l_1\psi_1+\gamma^{\mathfrak{F}}d_0)Y^*+(\overline{a}_1+l_1\psi_2+\gamma^{\mathfrak{F}}\overline{d}_0)\overline{Y}^*+(l_1,l_1)\alpha^*_1+b_0v^*_{0}\right]dt\\
\qquad\;\,\:\mbox{}+\left[c_1Y^*+\overline{c}_1\overline{Y}^*\right]dB_t+\left[d_0Y^*+\overline{d}_0\overline{Y}^*\right]dA_t, \quad t\in(0,\tau],\\
d\alpha^*_1=-\left(\mu_1\alpha^*_1+\nu_1\overline{\alpha}^*_1+c_1\beta^*_1+\overline{c}_1\overline{\beta}^*_1+\delta_1v^*_0+\overline{\delta}_1\overline{v}^*_0\right)dt
+\beta^*_1 dB_t, \quad t\in[0,\tau),\\
dP_{v^*_{0}}=\Big\{p_{0}Y^*+\overline{p}_{0}\overline{Y}^*-(a_1+l_1\psi_1+\gamma^{\mathfrak{F}}d_0)P_{v^*_{0}}-(\overline{a}_1+l_1\psi_2)\overline{P}_{v^*_{0}}-\overline{d}_0\mathbb{E}\left[\gamma^{\mathfrak{F}}P_{v^*_{0}}\Big|\mathfrak{F}_{0}\right]\\
\qquad\;\,\:\mbox{}-(c_1Q_{v^*_{0}}+\overline{c}_1\overline{Q}_{v^*_{0}})-(R_{v^*_{0}}\gamma^{\mathfrak{F}}d_0+\overline{d}_0\mathbb{E}\left[\gamma^{\mathfrak{F}}R_{v^*_{0}}\Big|\mathfrak{F}_{0}\right])\Big\}dt+Q_{v^*_{0}}dB_t+R_{v^*_{0}}dA_t, \quad t\in(0,\tau],\\
dZ_{v^*_{0}}=\left(
(l_1,l_1)^T
P_{v^*_{0}}+\mu_1^TZ_{v^*_{0}}+\nu_1^T\overline{Z}_{v^*_{0}}\right)dt+\left(c_1Z_{v^*_{0}}+\overline{c}_1\overline{Z}_{v^*_{0}}\right) dB_t,
\quad t\in[0,\tau),\\
Y^*(0)=x_0,\quad \alpha^*_1(\tau)=\iota(\tau),\quad P_{v^*_{0}}(\tau)=-r_0(\tau)Y^*(\tau),\quad Z_{v^*_{0}}(0)=(0,0)^T.
\end{cases}
\end{equation}
\end{thm}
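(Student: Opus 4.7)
The plan is to derive the optimality of $v_0^*$ by the Pontryagin-type stochastic maximum principle for MF-SDEs with default (cf.\ \cite{gou2020optimal}), treating Subproblem \ref{88} as a control problem whose state is the coupled forward system $(Y,\alpha_1)$ with jump part $dA_t$, and whose adjoint is the coupled backward system $(P_{v_0},Q_{v_0},R_{v_0},Z_{v_0})$ from the excerpt. First I would observe that the Hamiltonian $H_0$ is strictly concave in $v_0$ (because $q_0>0$) and linear in $(Y,\overline{Y},\alpha_1,\overline{\alpha}_1)$, so the first-order condition on $v_0$, together with the Fr\'{e}chet derivative in $\overline{v}_0$, gives precisely the candidate \eqref{50}--\eqref{6}. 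This produces the closed system \eqref{83} for the optimal sextuple.

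Next I would verify global optimality by a direct calculation of $J_0(v^*_1,v^*_2,v^*_0)-J_0(v^*_1,v^*_2,v_0)$ for an arbitrary competitor $v_0\in\mathcal{V}[0,\tau]$. Writing $\widetilde{Y}=Y^*-Y$, $\widetilde{\alpha}_1=\alpha^*_1-\alpha_1$, $\widetilde{v}_0=v^*_0-v_0$, the quadratic-plus-cross-term expansion of the cost yields the bound displayed in \eqref{51}, because the pure quadratic part is nonpositive by Assumption $(B)$. It then remains to show that the cross-term equals $\mathbb{E}\!\left[\int_0^\tau\!\!\bigl(q_0 v_0^*-b_0 P_{v^*_0}-\langle \delta_1,Z_{v^*_0}\rangle-\langle\overline{\delta}_1,\overline{Z}_{v^*_0}\rangle\bigr)\widetilde{v}_0\,dt\,\bigl|\,\mathfrak{F}_0\right]$, which vanishes by the choice \eqref{6}. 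The key identity behind this is the duality relation
\begin{equation*}
\mathbb{E}\!\left[\widetilde{Y}(\tau)P_{v^*_0}(\tau)\,\bigl|\,\mathfrak{F}_0\right]=\mathbb{E}\!\left[\bigl(\widetilde{Y}P_{v^*_0}-\langle\widetilde{\alpha}_1,Z_{v^*_0}\rangle\bigr)\bigr|_0^\tau\,\bigl|\,\mathfrak{F}_0\right],
\end{equation*}
which is obtained by applying It\^o's formula to $\widetilde{Y}P_{v^*_0}$ and to $\langle\widetilde{\alpha}_1,Z_{v^*_0}\rangle$ and subtracting, exactly as indicated in \eqref{52}. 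The linearity of the forward equations in $(\widetilde{Y},\widetilde{\alpha}_1,\widetilde{v}_0)$ guarantees that the resulting integrand reduces to the expression in \eqref{52}.

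The main technical obstacle, as I see it, lies in handling the single-jump term $dA_t$ and the conditional mean-field terms consistently under It\^o's formula. Specifically, when differentiating $\widetilde{Y}P_{v_0^*}$, the jump contributions $[d_0 Y+\overline{d}_0\overline{Y}]dA_t$ from the state and the symmetric jump term $R_{v^*_0}dA_t$ from the adjoint must be combined using Hypothesis $(\mathcal{H})$ and the compensator formula \eqref{37}; this is where the precise form of the drift of $P_{v^*_0}$ (involving $\gamma^{\mathfrak{F}}d_0$, $\overline{d}_0\,\mathbb{E}[\gamma^{\mathfrak{F}}P_{v^*_0}\mid\mathfrak{F}_0]$ and $R_{v^*_0}\gamma^{\mathfrak{F}}d_0$) has been tailored so as to cancel the corresponding forward jumps after taking conditional expectation. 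A parallel subtlety arises with the mean-field coefficients $\overline{a}_1,\overline{c}_1,\overline{d}_0$: by Fubini and the tower property, cross products such as $\overline{Y}\,P_{v_0^*}$ can be rewritten as $\overline{Y}\,\overline{P}_{v_0^*}$ after conditioning, which is exactly what makes the adjoint drift terms $(\overline{a}_1+l_1\psi_2)\overline{P}_{v^*_0}$ and $\overline{c}_1\overline{Q}_{v^*_0}$ come out right.

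Finally, once the cross-term vanishes and the pure quadratic term is shown to be nonpositive, concavity yields $J_0(v^*_1,v^*_2,v^*_0)\geq J_0(v^*_1,v^*_2,v_0)$ for every admissible $v_0$, hence $v_0^*$ defined by \eqref{6} is the essential supremum and $(Y^*,\alpha_1^*,\beta_1^*,P_{v_0^*},Q_{v_0^*},R_{v_0^*},Z_{v_0^*})$ solves \eqref{83}. Well-posedness of \eqref{83} itself follows from standard solvability results for coupled linear MF-FBSDEs with default jumps once the feedback \eqref{6} is substituted, using the boundedness of $\gamma^{\mathfrak{F}}$ guaranteed by Assumption $(E)$ and the fact that $\psi_1,\psi_2,\mu_1,\nu_1,\delta_1,\overline{\delta}_1$ are already known bounded quantities from Subsection 3.2.
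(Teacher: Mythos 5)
Your proposal is correct and follows essentially the same route as the paper: the first-order condition on the Hamiltonian $H_0$ (including the Fr\'{e}chet derivative in $\overline{v}_0$) to produce the candidate \eqref{50}, followed by a verification step that expands $J_0(v^*_1,v^*_2,v^*_0)-J_0(v^*_1,v^*_2,v_0)$, bounds the pure quadratic part by concavity, and kills the cross term via It\^{o}'s formula applied to $\widetilde{Y}P_{v^*_0}$ and $\langle\widetilde{\alpha}_1,Z_{v^*_0}\rangle$ exactly as in \eqref{51}--\eqref{52}. You additionally state the final inequality in the direction appropriate for a supremum and comment on well-posedness of \eqref{83}, neither of which changes the substance of the argument.
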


\begin{remark} If $x_0\in \mathbb{R}$, then the conditional mean-field term $\overline{X}(t)$ reduces to the classical mean-field term.  If $\tau>T$ a.s., then the default event will not happen before time $T$.  Moreover, if $x_0\in \mathbb{R}$ and $\tau>T$ a.s., then Theorem \ref{53} reduces to Theorem 3.4 in \cite{Wang2020A}.
\end{remark}

By choosing $v_0=v_0^*$ in \eqref{82}, $(P_{v^*_{1,1}},P_{v^*_{2,1}})$ is uniquely determined and so is $(v^*_{1,1},v^*_{2,1})$. Combining Theorems \ref{7}, \ref{13} and \ref{53}, we have the following results.
\begin{thm}
The Stackelberg solution of Problem \ref{32} is given by \eqref{80}, \eqref{81} and \eqref{50}.
\end{thm}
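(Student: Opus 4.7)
The plan is to assemble the three preceding theorems via the backward induction scheme laid out at the start of Section 3. First I would recall the gluing ansatz $\widetilde{v}_i(t) = v^*_{i,1}(t)\mathbb{I}_{t\in[0,\tau)} + v^*_{i,2}(t)\mathbb{I}_{t\in[\tau,T]}$ together with the identity $J_i(v_1^*, v_2^*, v_0) = J_{i,1}(v^*_{1,1}, v^*_{2,1}, v_0)$ already noted after Subproblem \ref{42}. By Theorem~4.1 of \cite{gou2020optimal}, which legitimizes concatenating two-stage Nash equilibria of MF-SDEs across the default time $\tau$, it suffices to verify the equilibrium condition separately on $(\tau, T]$ and on $[0,\tau]$. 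Theorem~\ref{7} supplies the second-stage equilibrium in the feedback form \eqref{80}, and Theorem~\ref{13} supplies the first-stage equilibrium in the feedback form \eqref{81}; concatenating them yields a pair $(\widetilde v_1, \widetilde v_2)\in\mathcal V[0,T]\times\mathcal V[0,T]$ that satisfies \eqref{35} for any admissible $v_0$, which is exactly the followers' side of the Stackelberg solution.

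Next, given that the followers react with $(\widetilde v_1, \widetilde v_2)$, the leader's problem \eqref{38} reduces to Subproblem~\ref{88}, where the anticipated Nash response has been absorbed into the drift via the feedback representation \eqref{27} and the coupled system \eqref{36}. Theorem~\ref{53} then delivers $v_0^*$ in the form \eqref{50}, together with the well-posed MF-FBSDE \eqref{83}. To close the loop I would substitute $v_0 = v_0^*$ back into \eqref{82}: this uniquely determines $(P_{v^*_{1,1}}, P_{v^*_{2,1}})$ and hence the first-stage controls $(v^*_{1,1}, v^*_{2,1})$ via \eqref{81}, while $X_1^*(\tau)$ then feeds into the initial condition of \eqref{1}, which uniquely determines $(P_{v^*_{1,2}}, P_{v^*_{2,2}})$ and hence the second-stage controls via \eqref{80}. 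The triple $(v_0^*, v_1^*, v_2^*)$ so obtained simultaneously satisfies \eqref{35} and \eqref{38}, which is the definition of the Stackelberg solution for Problem~\ref{32}.

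The main subtlety I expect is making the backward induction genuinely compatible with the Stackelberg hierarchy. The second-stage equilibrium is anchored at $X_2^*(\tau) = X_1(\tau)$, which itself depends on $v_0$; the first-stage equilibrium inherits random terminal conditions $\kappa_{i,1}(\tau)$, $\kappa_{i,2}(\tau)$ coming from the second-stage value function via Theorem~\ref{24}; and the leader's FBSDE \eqref{83} involves these same feedback coefficients $\psi_{i,1}, \psi_{i,2}, \psi_{i,0}$ through $\alpha_1^*$. One must check that the resulting coupled system is well-posed and that $v_0^*$ is genuinely in $\mathcal V[0,\tau]$; fortunately this is already guaranteed by Assumptions $(A)$--$(E)$, the two existence lemmas of Subsection~3.1, and the parallel well-posedness of \eqref{28}--\eqref{29} recorded after Theorem~\ref{13}. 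With all ingredients in place, the theorem follows by a short assembly argument concatenating Theorems~\ref{7}, \ref{13}, and \ref{53}, with no further computation required.
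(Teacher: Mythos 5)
Your proposal matches the paper's own argument: the paper likewise assembles the result by invoking the gluing of the two-stage Nash equilibria via Theorem 4.1 of \cite{gou2020optimal}, combining Theorems \ref{7}, \ref{13} and \ref{53}, and then substituting $v_0=v_0^*$ into \eqref{82} to pin down $(P_{v^*_{1,1}},P_{v^*_{2,1}})$ and hence the followers' first-stage controls. Your added remarks on the coupling of terminal data across stages and on well-posedness are consistent with, and slightly more explicit than, what the paper records.
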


In practical situations, the feedback representation of the Stackelberg solution is very useful. However, it is hard to find the feedback representation of the Stackelberg solution of Problem \ref{32} since random coefficients and conditional expectation are involved in \eqref{83}. Nevertheless, the following example shows that we can obtain the feedback representation of the Stackelberg solution in some special cases.
\begin{example}
When $\overline{a}_1=\overline{c}_1=\overline{d}_0\equiv 0$, \eqref{6} and \eqref{83} reduce respectively to
$$
v_0^*=q_0^{-1}(b_0P_{v^*_{0}}+\langle\delta_1,Z_{v^*_{0}}\rangle)
$$
and
\begin{equation*}
\begin{cases}
d\mathcal{Y}=\left[\mathcal{L}_1\mathcal{Y}+\mathcal{L}_2\mathcal{P}\right]dt+c_1\mathcal{Y}dB_t+\mathcal{L}_3\mathcal{Y}dA_t, \quad t\in(0,\tau],\\
d\mathcal{P}=-\left(\mathcal{M}_1\mathcal{P}+\mathcal{M}_2\mathcal{Y}+\mathcal{M}_3\mathcal{R}+c_1\mathcal{Q}
\right)dt
+\mathcal{Q}dB_t+\mathcal{R}dA_t, \quad t\in[0,\tau),\\
\mathcal{Y}(0)=(x_0,0,0)^T,\quad \mathcal{P}(\tau)=\mathcal{G}(\tau)\mathcal{Y}(\tau).
\end{cases}
\end{equation*}
Here $k^{\mathfrak{F}}=a_1+l_1\psi_1+\gamma^{\mathfrak{F}}d_0$ and
$$
\mathcal{Y}=
\begin{bmatrix}
Y^*\\
Z_{v^*_{0}}
\end{bmatrix},\quad
\mathcal{P}=
\begin{bmatrix}
P_{v^*_{0}}\\
\alpha^*_1
\end{bmatrix},\quad
\mathcal{Q}=
\begin{bmatrix}
\beta_1^*\\
Q_{v^*_{0}}
\end{bmatrix},\quad
\mathcal{R}=
\begin{bmatrix}
0\\
R_{v^*_{0}}
\end{bmatrix},
$$
$$
\mathcal{L}_1=
\begin{bmatrix}
k^{\mathfrak{F}} & q_0^{-1}b_0\delta_1^T\\
0 & \mu_1^T
\end{bmatrix},\quad
\mathcal{L}_2=
\begin{bmatrix}
q_0^{-1}b^2_0 &(l_1,l_1)\\
(l_1,l_1)^T &0
\end{bmatrix},\quad
\mathcal{M}_1=
\begin{bmatrix}
k^{\mathfrak{F}}&0 \\
\delta_1q_0^{-1}b_0 &\mu_1
\end{bmatrix},\quad
\mathcal{M}_2
=\begin{bmatrix}
0& q_0^{-1}\delta_1\delta^T_1\\
-p_{0} & 0
\end{bmatrix},
$$
$$
\mathcal{L}_3=
\begin{bmatrix}
d_0 & 0 &0\\
0 & 0 &0\\
0 & 0 &0
\end{bmatrix},\quad
\mathcal{M}_3
=\begin{bmatrix}
0 & 0 &0\\
0 & 0 &0\\
0&0 & \gamma^{\mathfrak{F}}d_0
\end{bmatrix},\quad
\mathcal{G}(\tau)
=\begin{bmatrix}
1& 0&0\\
0& 1 &0\\
0&0& -r_0(\tau)
\end{bmatrix}.
$$
Obviously, $\mathcal{L}_1$ and $\mathcal{M}_1$ are random matrixes with $\mathcal{L}_1=\mathcal{M}_1^T$. Analogously by the method of undetermined coefficients as in Subsection 3.1, suppose that
$$
\mathcal{P}(t)=\widetilde{G}(t)\mathcal{Y}(t),
$$
where $d\mathcal{G}(t)=\varpi_1dt+\varpi_2dB_t+\varpi_3dA_t$. Then, we are able to conclude that $\widetilde{G}(t)$ is a solution to the following stochastic Riccati equation:
\begin{equation}\label{2}
\begin{cases}
d\widetilde{G}=-\left[\widetilde{G}\mathcal{L}_1+\widetilde{G}\mathcal{L}_2\widetilde{G}+\mathcal{L}_1^T\widetilde{G}+\mathcal{M}_2+\mathcal{M}_3\widetilde{G}\mathcal{L}_3
+c_1^2\widetilde{G}+c_1\varpi_2+\mathcal{M}_3\varpi_3\right]dt\\
\qquad\;\;\,+\varpi_2dB_t+\varpi_3\mathcal{Y}dA_t, \quad t\in[0,\tau),\\
\widetilde{G}(\tau)=\mathcal{G}(\tau).
\end{cases}
\end{equation}
Note that \eqref{2} is indeed a matrix-valued nonlinear BSDE with single jump. Thus, by using Theorem 4.1 in \cite{Zhang2018Backward}, the solution to stochastic Riccati equation \eqref{2} has an feedback representation and so does $v_0^*$. By setting $v_0=v_0^*$ in \eqref{4}, $(v^*_{1,1},v^*_{2,1})$ is uniquely determined via \eqref{81} and \eqref{27}. Thus, the feedback representation of Stackelberg solution is obtained.
\end{example}

\section{Conclusions}
This paper focuses on the study of LQ-MF-SSDG with random exit time in the framework of progressive enlargement of filtration, in which the leader is allowed to stop her strategy at a random time. By employing the backward induction method, we obtain the  Stackelberg solution of LQ-MF-SSDG with random exit time. We would like to point out that in the classical mean-field setting, it is hard to derive BSDE for LQ-MF-SSDG because $\mathbb{E}[a(t)Y(t)\mathbb{E}\left[X(t)\right]]\neq\mathbb{E}[X(t)\left[a(t)Y(t)\right]]$ when $a(t)$ is an $\mathfrak{F}_t$-adapted process. However, by employing the fact that $\mathbb{E}[a(t)Y(t)\mathbb{E}\left[X(t)\Big|\mathfrak{F}_s\right]\Big|\mathfrak{F}_s]=\mathbb{E}[X(t)\left[a(t)Y(t)\Big|\mathfrak{F}_s\right]\Big|\mathfrak{F}_s]\;(t\geq s)$,  we can obtain BSDE for LQ-MF-SSDG in the conditional mean-field setting and so the Pontryagin maximum principle remains valid.

Recall that the problem of the feedback representation of $v_0^*$ in \eqref{83} is unsolved. Therefore, how to find the feedback representation of Stackelberg solution for LQ-MF-SSDG with random exit time is a problem worth investigating. We also mention that the backward induction method becomes invalid for solving the Stackelberg solution when the followers are allowed to stop their strategies at a random time.  This problem is closely related to the multi-stage Stackelberg game, which is usually known as a very difficult problem to be solved. Thus, it would be interesting to derive the Stackelberg solution of LQ-MF-SSDG when one of the followers is allowed to exit the game at a random time. We will consider these problems in future research.



\begin{thebibliography}{10}

\bibitem{Aksamit2017Enlargement}
Aksamit A. and Jeanblanc M.
\newblock Enlargement of Filtration with Finance in View.
\newblock {\em Springer, Cham}, 2017.


\bibitem{Askar2018Tripoly}
Askar S. S.
\newblock Tripoly Stackelberg game model: one leader versus two followers.
\newblock {\em Appl. Math. Comput.}, \textbf{328} (2018), 301--311.

\bibitem{Babich2007SCompetition}
Babich V., Burnetas A. N. and Ritchken, P. H.
\newblock Competition and diversification effects in supply chains with supplier default risk.
\newblock {\em M$\&$som-manuf. Serv. Op.}, \textbf{9} (2007), no. 2, 123--146.

\bibitem{Bachir2020Stochastic}
Bachir Cherif K., Agram N. and Dahl K.
\newblock Stochastic maximum principle with default.
\newblock {\em arXiv: 2001.01535}, 2020.

\bibitem{Bensoussan2020Mean} 
Bensoussan A., Cass T., Chau M. H. M. and Yam S. C. P.
\newblock Mean field games with parametrized followers.
\newblock {\em IEEE Trans. Automat. Control}, \textbf{65} (2020), no. 1, 12--27.

\bibitem{Bensoussan2017Linear} 
Bensoussan A., Chau M. H. M., Lai Y., and Yam S. C. P.
\newblock Linear-quadratic mean field Stackelberg games with state and control delays.
\newblock {\em SIAM J. Control Optim.}, \textbf{55} (2017), no. 4, 2748--2781.

\bibitem{Calvia2020Risk}
Calvia A. and Gianin E R.
\newblock Risk measures and progressive enlargement of filtration: A BSDE approach.
\newblock {\em SIAM J. Financial Math.}, \textbf{11} (2020), no. 3, 815--848. 

\bibitem{Carreno2019Stackelberg}
Carre\~{n}o N. and Santos, M. C.
\newblock Stackelberg-Nash exact controllability for the Kuramoto-Sivashinsky equation.
\newblock {\em J. Differential Equations}, \textbf{266} (2019), no. 9, 6068--6108.

\bibitem{Revuz1999Continuous}
Revuz D. and Yor M.
\newblock Continuous Martingales and Brownian Motion (Third edition).
\newblock {\em Springer, Berlin}, 1999. 

\bibitem{Fang2017Coordinated}
Fang H., Xu L. and Wang X.
\newblock  Coordinated multiple-relays based physical-layer security improvement: A single-leader multiple-followers Stackelberg game scheme.
\newblock {\em  IEEE T. Inf. Foren. Sec.},  \textbf{13} (2017), no. 1, 197--209.

\bibitem{gou2020optimal}
Gou Z., Huang N. J., Wang M. H.
\newblock Optimal control problems governed by MF-SDEs with multi-defaults.
\newblock {\em arXiv: 2010.13608}, 2020.

\bibitem{huang2020Linear}
Huang J., Si K. and Wu Z.
\newblock Linear-quadratic mixed Stackelberg-Nash stochastic differential game with major-minor agents.
\newblock {\em Appl. Math. Optim.}, https://doi.org/10.1007/s00245-020-09713-z, \textbf{3} 2020.

\bibitem{Jeanblanc2020Characteristics}
Jeanblanc M. and Li L.
\newblock Characteristics and constructions of default times.
\newblock {\em  Financial Math.}, \textbf{11} (2020), no. 3, 720--749. 

\bibitem{Jeanblanc2009CMathematical}
Jeanblanc M., Yor M. and Chesney M.
\newblock Mathematical Methods for Financial Markets.
\newblock {\em Springer, London}, 2009. 

\bibitem{Jiang2018Multi}
Jiang J. and Liu X.
\newblock Multi-objective Stackelberg game model for water supply networks against interdictions with incomplete information.
\newblock {\em  European J. Oper. Res.}, \textbf{266} (2018), no. 3, 920--933. 

\bibitem{Kharroubi2014Progressive}
Kharroubi I. and Lim T.
\newblock Progressive enlargement of filtrations and backward stochastic differential equations with jumps.
\newblock {\em  J. Theoret. Probab.}, \textbf{27} (2014), no. 3, 683--724. 

\bibitem{Lin2019feedback}
Lin Y.
\newblock Feedback Stackelberg strategies for the discrete-time mean-field stochastic systems in infinite horizon.
\newblock {\em  J. Franklin Inst.}, \textbf{356} (2019), no. 10, 5222--5239.

\bibitem{Lin2018An}
Lin Y., Jiang X. and Zhang W.
\newblock An open-loop Stackelberg strategy for the linear quadratic mean-field stochastic differential game.
\newblock {\em  IEEE Trans. Automat. Contr.}, \textbf{64} (2019), no. 1, 97--110.


\bibitem{Megahed2019the}
Megahed E.
\newblock The Stackelberg differential game for counter-terrorism.
\newblock {\em  Qual. Quant.},  \textbf{53} (2019), 207--220.

\bibitem{Moon2018linear}
Moon J. and Ba\c{s}ar T.
\newblock Linear quadratic mean field Stackelberg differential games.
\newblock {\em  Automatica}, \textbf{97} (2018), 200--213. 

\bibitem{Moon2021linear}
Moon J. and Ba\c{s}ar T.
\newblock Linear-quadratic stochastic Stackelberg differential games for jump-diffusion systems.
\newblock {\em  SIAM J. Control Optim.}, \textbf{59} (2021), no. 2, 954--976. 

\bibitem{Mu2018Stackelberg}
Mu Y.
\newblock Stackelberg-Nash equilibrium, social welfare and optimal structure in hierarchical continuous Public Goods game.
\newblock {\em  Systems Control Lett.}, \textbf{112} (2018), 1--8.

\bibitem{Novak2021a}
Novak A.J., Feichtinger G. and Leitmann G.
\newblock A differential game related to terrorism: Nash and Stackelberg strategies.
\newblock {\em  J. Optim. Theory Appl.}, \textbf{144} (2010), no. 3, 533--555. 

\bibitem{Peng2009BSDE}
Peng S. and Xu X.
\newblock BSDEs with random default time and their applications to default risk.
\newblock {\em arXiv preprint arXiv:0910.2091}, 2009.

\bibitem{Pham2010Stochastic}
Pham H.
\newblock Stochastic control under progressive enlargement of filtrations and applications to multiple defaults risk management.
\newblock {\em Stochastic Process. Appl.}, \textbf{120} (2010), no. 9, 1795--1820. 

\bibitem{Si2021Backward}
Si K. and Wu Z.
\newblock Backward-forward linear-quadratic mean-field Stackelberg games.
\newblock {\em Adv. Difference Equ.},  https://doi.org/10.1186/s13662-021-03236-9, \textbf{73} (2021). 

\bibitem{Stackelberg1952The}
Stackelberg H. V.
\newblock The Theory of the Market Economy.
\newblock {\em  Oxford University Press},
 London, 1952.

\bibitem{Wang2020An}
Wang G., Wang Y. and Zhang S.
\newblock An asymmetric information mean-field type linear-quadratic stochastic Stackelberg differential game with one leader and two followers.
\newblock {\em Optim. Control Appl. Meth.}, \textbf{41} (2020), no. 4, 1034--1051. 

\bibitem{Wang2020A}
Wang G. and Zhang S.
\newblock A mean-field linear-quadratic stochastic Stackelberg differential game with one leader and two followers.
\newblock {\em J. Syst. Sci. Complex.}, \textbf{33} (2020), no. 5, 1383--1401. 

\bibitem{Wu2017A}
Wu, C., Zhao, Q. and Xi, M.
\newblock A retailer-supplier supply chain model with trade credit default risk in a supplier-Stackelberg game.
\newblock {\em Comput. Ind. Eng.}, \textbf{112} (2017), no. 5, 568--575.

\bibitem{Yong2009Stochastic}
Yong J. M. and Zhou X. Y.
\newblock Stochastic Controls: Hamiltonian Systems and HJB Equations.
\newblock {\em Springer, New York}, 1999.

\bibitem{Zhang2018Backward}
Zhang F., Dong Y. and Meng Q.
\newblock  Backward stochastic riccati equation with jumps associated with stochastic linear quadratic optimal control with jumps and random coefficients.
\newblock {\em arXiv: 1808.08336}, 2018.

\bibitem{zou2020A}
Zou S., Ma Z., Peng W. and Liu X.
\newblock A retailer-supplier supply chain model with trade credit default risk in a supplier-Stackelberg game.
\newblock {\em Int. J. Control}, \textbf{93} (2020), no. 12, 2804--2813.

\end{thebibliography}
\end{document}